\newcommand{\mailto}[1]{\href{mailto:#1}{\texttt{#1}}}
\newtheorem{thm}{Theorem}[section]
\newtheorem{lem}{Lemma}[section]
\newtheorem{dfn}{Definition}[section]
\theoremstyle{definition}\newtheorem{rem}{Remark}[section]
\newcommand{\dd}[0]{\text{\protect\textup{d}}}
\newcommand{\Rr}[0]{\mathbb{R}}
\newcommand{\eps}[0]{\varepsilon}
\newcommand{\wh}[1]{\widehat{#1}}
\newcommand{\mfn}[2]{{#1}_{#2} } 
\newcommand{\mfnc}[3]{{#1}^{#2}_{#3} } 
\newcommand{\mfnh}[2]{\widehat{{#1}}_{#2} } 
\newcommand{\mfnch}[3]{\widehat{{#1}}^{#2}_{#3} } 
\newcommand{\degg}[0]{\text{\protect\textup{deg}}} 
\newcommand{\APi}[1]{\mathcal{A}^{\Pi}_{#1}} 
\newcommand{\APig}[2]{\mathcal{A}^{#1}_{#2}}
\newcommand{\BPi}[1]{B^{\Pi}_{#1}} 
\newcommand{\TPi}[1]{T^{(\Pi,#1)}} 
\newcommand{\lipg}[0]{$\text{Lip}^{\gamma}$ } 
\newcommand{\lipGP}[0]{$\text{Lip}^{\Gamma,\Pi}$ } 
{\unskip\nobreak\hskip 1em plus 1fil\nobreak$\Box$
\parfillskip=0pt%
\endtrivlist}
\begin{document}


\begin{center}
{\Large{\bf Differential equations driven by $\Pi$-rough
    paths\footnote{\textbf{A slightly revised version of the paper has been
      accepted for publication by the Proceedings of the Edinburgh
      Mathematical Society. The copyright of the published paper
      belongs to the Edinburgh Mathematical Society. The journal is
      available online:} \url{http://journals.cambridge.org/action/displayJournal?jid=PEM} }}}
\end{center}

\begin{center}
{\large Lajos Gergely Gyurk\'o}\footnote{Mathematical Institute, University of Oxford
\newline
Oxford-Man Institute of Quantitative Finance, University of Oxford
\newline
E-mail:  \mailto{gyurko@maths.ox.ac.uk}
\newline
\textbf{Keywords:} Rough paths, Differential equations, Stochastic integrals, Stochastic differential equations
\newline
\textbf{AMS 2010 Mathematics Subject Classification:} 
60H05, 
60H10, 
93C15  
\newline
This research was supported by the Oxford-Man Institute of Quantitative Finance and EPSRC.
}
\end{center}

\begin{abstract}
This paper revisits the concept of rough paths of inhomogeneous degree of smoothness (geometric $\Pi$-rough paths in our terminology) sketched by Lyons \cite{rp_paper}. Although geometric $\Pi$-rough paths can be treated as $p$-rough paths for a sufficiently large $p$ and the theory of integration of \lipg one-forms ($\gamma>p-1$) along geometric $p$-rough paths (ref. \cite{rp_paper}, \cite{rp_book}) applies, we prove the existence of integrals of one forms under weaker conditions. Moreover, we consider differential equations driven by geometric $\Pi$-rough paths and give sufficient conditions for existence and uniqueness of solution. 
\end{abstract}



\section*{Introduction}
The theory of rough paths due to Lyons \cite{rp_paper,rp_book} enables the definition of a wide class of stochastic differential equations in the path-wise sense. In particular, the rough paths representation of Brownian motion (enhanced or lifted  Brownian motion) was considered in \cite{rp_book}, and has been extensively studied by Friz \& Victoir \cite{bmFrizVictoir, bookFrizVictoir} and many others. Coutin and Qian \cite{fbmQian} proved the existence of a geometric rough path associated with the fractional Brownian motion with Hurst parameter greater than $1/4$. A different approximation of the enhanced fractional Brownian motion was studied by Millet and Sanz-Sol\'e \cite{fbmSole}. The rough path representation of an even larger class of Gaussian processes has been explored by Friz and Victoir \cite{gaussFrizVictoir, bookFrizVictoir} and others. In all these works, the roughness degree of the driving noise is described by a single real number $p$, and hence it is assumed to be homogeneous in all directions.

In section \ref{sec:PiRPs}, we revisit the definition of geometric rough paths of
inhomogeneous degree of smoothness (geometric $\Pi$-rough paths in our
terminology) sketched by Lyons \cite{rp_paper}. 
Our sharpened extension theorem of $\Pi$-rough paths specifies which terms of the signature of a rough path of inhomogeneous degree of smoothness determine the whole signature. In particular, the extension theorem determines what terms are to be specified in order to lift stochastic processes with mixed components such as Brownian and fractional Brownian components to $\Pi$-rough paths.

We note that geometric $\Pi$-rough paths can be treated as $p$-rough
paths for a sufficiently large $p$, and the theory of integration of
\lipg one-forms ($\gamma>p-1$) along geometric $p$-rough paths
(ref. \cite{rp_paper,rp_book}) could be applied. In section
\ref{sec:Integration}, we show that the
\lipg condition on the one-form can be weakened if we exploit the fact
that the underlying $\Pi$-rough path has components with roughness
parameter smaller than $p$. In particular, we introduce the definition
of \lipGP one-forms. Moreover, as the main result of the paper, we
define and prove the existence of integrals of
\lipGP one-forms along $\Pi$-rough paths (Theorem \ref{_thm_lipGP_Integration}). 


In section \ref{sec:RDE}, as a particular application of the main result, we consider differential equations of the form
\begin{equation}
dY_t=f(X_t,Y_t)dX_t, \ \ Y_0=\xi\in W
\label{eq:introRDE1}
\end{equation}
where $X$ is a geometric $\Pi$-rough paths defined on some Banach space $V$, $f:V\oplus W\to L(V,W)$ and $W$ is some Banach space. 
%
%
This equation can be rewritten as follows. 
\begin{equation}
d\tilde{Y}_t=\tilde{f}(\tilde{Y}_t)dX_t, \ \ \tilde{Y}_0=(X_0,\xi)\in V\oplus W,
\label{eq:introRDE2}
\end{equation}
where $\tilde{f}:V\oplus W\to L(V,V\oplus W)$ defined by
\begin{equation*}
\tilde{f}(v,w)(u)=(u,f(v,w)(u)).
\end{equation*}
From Lyons' Universal Limit Theorem (ref. \cite{rp_paper,rp_book,rp_notes}), we know that solution to \eqref{eq:introRDE2} exists and is unique if $f$ is \lipg with $\gamma>p$. 
We adapt the proof of \cite{rp_notes}, and show the existence and uniqueness of solution for our case under 
sufficient conditions that are weaker than that is required by Lyons' Universal Limit Theorem in the homogeneous case. 

We note that the $\Pi$-rough paths form a more general class than the class of $(p,q)$-rough paths as defined in \cite{pqrp}; in the case of $\Pi$-rough paths, apart from assuming the  parameters that specify the roughness in the various directions to be at least $1$, we do not make any further restrictions on them.   

Moreover,  throughout the paper, we assume the spatial structure of the inhomogeneity to be static. However, in general applications, the directions of the inhomogeneity of the driving noise and hence that of the solution may change in time and/or in space. Such applications fall beyond the scope of the paper, although our results might be relevant for them. 




\section{$\Pi$-rough paths}\label{sec:PiRPs}
Throughout in this section, $k$ denotes a fixed positive integer and $\Pi=(p_1,\dots,p_k)$ is a \emph{real $k$-tuple}, such that $p_i\ge 1$ is a real number for all $i\in\{1,\dots, k\}$. Furthermore, let a Banach space $V$ of the form $V=V^1\oplus\cdots\oplus V^k$ be given for some Banach spaces $V^1,\dots, V^k$.

\begin{dfn}\label{_dfn_PiBasics}
 We say that $R=(r_1,\dots,r_l)$ is a \emph{$k$-multi-index} if $1\le r_j\le k$ is an integer for all $j\in\{1,\dots,l\}$. The empty multi-index is denoted by $\epsilon$ and the set of all $k$-multi-indexes of finite length is denoted by $\mathcal{A}^{k}$. 

Given the multi-index $R=(r_1,\dots,r_l)$, we define the $k$-multi-index $R-$ by
\begin{equation*}
R-=(r_1,r_2,\dots,r_{l-1},r_l)-=(r_1,r_2,\dots,r_{l-1}).
\end{equation*}

The concatenation of the multi-indices $R=(r_1,\dots,r_l)$ and $Q=(q_1,\dots,q_m)$ is denoted by
\begin{equation*}
R\ast Q=(r_1,\dots,r_l)\ast(q_1,\dots,q_m)=(r_1,\dots,r_l,q_1,\dots,q_m).
\end{equation*}
\end{dfn}

\begin{dfn}\label{_dfn_degrees}
For the $k$-multi-index $R=(r_1,\dots,r_l)$ we denote the \emph{length} by $\Vert R\Vert=l$. Furthermore, we define the 
function $n_j$ for $j\in\{1,\dots,k\}$ by
\begin{equation*}
n_j(R):=\text{\protect\textup{card}}\{i| r_i=j, r_i\in R\}.
\end{equation*}

We introduce the $\Pi$-degree of $R$ as
\begin{equation*}
\degg_{\Pi}(R)=\sum_{j=1}^k\frac{n_{j}(R)}{p_j}.
\end{equation*}
Note that $\degg_{\Pi}(\epsilon)=0$.
We also introduce the function $\Gamma_{\Pi}:\mathcal{A}^k\to [0,\infty)$ by
\begin{equation*}
\Gamma_{\Pi}(R)=\left(\frac{n_1(R)}{p_1}\right)!\cdots\left(\frac{n_k(R)}{p_k}\right)!, \text{ for } R\in\mathcal{A}^k,
\end{equation*}
where $(\cdot)!$ denote the $\Gamma$-function. 

Let $s\ge 0$ be real. We introduce the set of $k$-multi-indices
\begin{equation*}
\APi{s}:=\left\{ 
R=(r_1,\dots,r_l)\Big|\ l\ge 1, \ \degg_{\Pi}(R)\le s \right\}.
\end{equation*}

\end{dfn}

Let $S^{\Pi}$ denote the set
\begin{equation*}
S^{\Pi}=\left\{s=\degg_{\Pi}(R)\ | \ R\in\mathcal{A}^k\right\}.
\end{equation*}
Note that $S^{\Pi}$ is unbounded from above and closed under addition. Also note that since for any $R\in\mathcal{A}^k$,
\begin{equation*}
\degg_{\Pi}(R)>\frac{\Vert R\Vert}{\max_{1\le i\le k}{p_i}}
\end{equation*}
the set $\{R\in\mathcal{A}^k|\degg_{\Pi}(R)\le s\}$ is finite for all $s\ge 0$. This implies, that the elements of $S^{\Pi}$ can be listed in ascending order. The $m$th element in the ordered $S^{\Pi}$ will be denoted by $s_m$.

\begin{dfn}\label{_dfn_InhomSetup}
The space of formal series of tensors of $V$ is equivalently represented by 
\begin{equation*}
T(V)=\bigoplus_{n=0}^{\infty}V^{\otimes n}=\bigoplus_{(r_1,\dots,r_l)\in\mathcal{A}^k}V^{r_1}\otimes\cdots\otimes V^{r_l},
\end{equation*}
where $V^{\otimes 0}:=\Rr$. 

For a $k$-multi-index $R=(r_1,\dots,r_l)$, we introduce the notation
\begin{align*}
V^{\otimes R}&=V^{r_1}\otimes\cdots \otimes V^{r_l}, \text{ and}\\
V^{(\Pi,s)}&=\sum_{\degg_{\Pi}(R)=s}V^{\otimes R}, \text{ for } s\in S^{\Pi}.
\end{align*}
In general, for a vector space $U=A\oplus B$, $\pi_A$ and $\pi_B$ denote the canonical projection onto $A$ and $B$ respectively, i.e. for $u=a+b\in U$,
such that $a\in A$ and $b\in B$, $\pi_Au=a$ and $\pi_Bu=b$.
We extensively use the projection $\pi_V$ onto the $V$ component of $T(V)$. 

Let $\pi_R:=\pi_{V^{r_1}\otimes\cdots \otimes V^{r_l}}$ and $\pi_{T(V^i)}$ for $i\in\{1,\dots,k\}$ denote the canonical projections
\begin{eqnarray*}
&&\pi_R:=\pi_{V^{r_1}\otimes\cdots \otimes V^{r_l}}:T(V)\to V^{\otimes R}\\
&&\pi_{T(V^i)}:T(V)\to T(V^i). 
\end{eqnarray*}
%

Given an element $v\in V$ and a multi-index $R=(r_1,\dots,r_l)$,  we introduce the element $v_R$ as follows:
\begin{equation*}
v_R:=(\pi_{(r_1)}v)\otimes\cdots\otimes(\pi_{(r_l)}v)\in V^{\otimes R}.
\end{equation*}

The set $\BPi{s}$ defined by
\begin{equation*}
\BPi{s}:=\left\{a\in T(V) | \ \forall R\in \APi{s}, \ \pi_R(a)=0 \right\}
\end{equation*}
is an ideal in $T(V)$.

The \emph{truncated tensor algebra of order} $(\Pi,s)$ is defined as the quotient algebra
\begin{equation*}
\TPi{s}(V):=T(V)/\BPi{s}.
\end{equation*}

\end{dfn}

We chose the tensor norms $\Vert\cdot\Vert_R$ for all
$R\in\mathcal{A}^k$ to satisfy 
\begin{equation*}
\Vert a\otimes b\Vert_{R\ast Q}\le \Vert a\Vert_R\Vert b \Vert_Q, \forall a\in V^{\otimes R}, \forall b\in V^{\otimes Q}.
\end{equation*}
We will drop the multi-index from the notation of the norm if it does not result in any ambiguity. 

\begin{dfn}[Control function]\label{_ch1_dfn_control}
Let $T$ be a positive real and $\Delta_T$ denote the set $\{(s,t)\in[0,T]\times[0,T]|s\le t\}$.
A \emph{control function}, or \emph{control}, on $[0,T]$ is a uniformly continuous non-negative function $\omega:\Delta_T\to[0,+\infty)$ which is super-additive, i.e.
\begin{equation*}
\omega(s,u)+\omega(u,t)\le\omega(s,t) \ \forall s,u,t\in[0,T], \ s\le u \le t
\end{equation*}
and for which $\omega(t,t)=0$ for all $t\in[0,T]$.
\end{dfn}

\begin{dfn}[Finite $\Pi$-variation] 
Let $\omega$ be a control on $[0,T]$. For a positive real $q$, the map
$\mfn{X}{}:\Delta_T\to\TPi{q}$ is \emph{multiplicative} if for all
$0\le s\le t \le T$, $\pi_{\eps}\mfn{X}{s,t}=1$ and for all $0\le s\le
t \le u\le T$,
\begin{equation*}
\mfn{X}{s,u}=\mfn{X}{s,t}\otimes\mfn{X}{t,u}.
\end{equation*}
Furthermore, $\mfn{X}{}$ is said to have finite $\Pi$ variation controlled by $\omega$ if there exists a positive $\beta$ such that
\begin{equation*}
\left\Vert \pi_R\left(\mfn{X}{s,t}\right)\right\Vert \le \frac{\omega(s,t)^{\degg_{\Pi}(R)}}{\beta^k\Gamma_{\Pi}(R)}
\end{equation*}
for all $(s,t)\in\Delta_T$ and for all $k$-multi-index $R\in\APi{q}$. 
\end{dfn}

The Extension theorem states that a $\Delta_T\to \TPi{1}(V)$ multiplicative functional of finite $\Pi$-variation can be uniquely extended to a $\Delta_T\to \TPi{q}(V)$ multiplicative functional of finite $\Pi$-variation for any positive $q$. This will allow us to define $\Pi$-rough paths as $\Delta_T\to \TPi{1}(V)$ multiplicative functionals satisfying certain properties (see Definition \ref{_dfn_PiRP}).

\begin{thm}[Extension theorem of multiplicative functionals of finite $\Pi$-variation]\label{_thm_PiExtension}
Let $\mfn{X}{}:\Delta_T\to \TPi{1}(V)$ be a multiplicative functional of finite $\Pi$-variation controlled by $\omega$. Then for every $k$-multi-index $R\in\mathcal{A}^{k}\setminus \APi{1}$, there exists a unique continuous function $\mfnc{X}{R}{}:\Delta_t\to V^{\otimes R}$ such that 
\begin{equation*}
(s,t)\mapsto \mfn{X}{s,t}=\sum_{R\in\mathcal{A}^{k}}\mfnc{X}{R}{s,t}\in T(V)
\end{equation*}
is a multiplicative functional of finite $\Pi$-variation controlled by $\omega$ in the following sense:
\begin{equation*}
\left\Vert \mfnc{X}{R}{s,t}\right\Vert\le \frac{\omega(s,t)^{\frac{n_1(R)}{p_1}+\cdots+\frac{n_k(R)}{p_k}}}{\beta^k\left(\frac{n_1(R)}{p_1}\right)!\cdots\left(\frac{n_k(R)}{p_k}\right)!}
=
\frac{\omega(s,t)^{\degg_{\Pi}(R)}}{\beta^k\Gamma_{\Pi}(R)}
\end{equation*} 
for all $R\in\mathcal{A}^k$, where
\begin{equation*}
\beta\ge \left(p_1^2\cdots p_k^2\left(1+\sum_{r=3}^{\infty}\left(\frac{2}{r-2}\right)^{s_{m^*+1}}\right)\right)^{1/k}
\label{_ch2_eq_ConditionBeta}
\end{equation*}
and $s_{m^*}$ and $s_{m^*+1}$ are the unique pair of adjacent elements of the ordered $S^{\Pi}$ for which $s_{m^*}\le 1 < s_{m^*+1}$.
\end{thm}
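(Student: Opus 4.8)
The plan is to argue by induction over the ordered set $S^{\Pi}$, extending $\mfn{X}{}$ one $\Pi$-degree at a time. Since $s_{m^*}\le 1<s_{m^*+1}$, no multi-index has $\Pi$-degree in $(s_{m^*},1]$, so $\APi{1}=\APi{s_{m^*}}$ and $\TPi{1}(V)=\TPi{s_{m^*}}(V)$; thus the given multiplicative functional of finite $\Pi$-variation already settles the level $s_{m^*}$. Suppose $\mfn{X}{}$ has been extended to a multiplicative functional on $\TPi{s_n}(V)$ with $\Vert\pi_R(\mfn{X}{s,t})\Vert\le \omega(s,t)^{\degg_{\Pi}(R)}/(\beta^k\Gamma_{\Pi}(R))$ for every $R$ with $\degg_{\Pi}(R)\le s_n$ (true at $n=m^*$ after enlarging $\beta$ if necessary, which only weakens the hypothesis). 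I then build $\mfnc{X}{R}{}$ for all $R$ with $\degg_{\Pi}(R)=s_{n+1}$.

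For $(s,t)\in\Delta_T$ and a finite partition $D=(s=t_0<\cdots<t_N=t)$, let $\mfnc{X}{R,D}{s,t}\in V^{\otimes R}$ be the $V^{\otimes R}$-component of $\mfn{X}{t_0,t_1}\otimes\cdots\otimes\mfn{X}{t_{N-1},t_N}$ computed in $\TPi{s_{n+1}}(V)$, each factor lifted to $\TPi{s_{n+1}}(V)$ with all its $\Pi$-degree-$s_{n+1}$ components set to zero. Multiplicativity of $\mfn{X}{}$ on $\TPi{s_n}(V)$ makes the part of this product of $\Pi$-degree strictly less than $s_{n+1}$ equal to $\mfn{X}{s,t}$, so only the new level is undetermined. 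The key identity is that inserting one point $u\in(t_{j-1},t_j)$ into $D$ changes $\mfnc{X}{R,D}{s,t}$ by exactly the single cross term
\begin{equation*}
\Delta^R_{t_{j-1},u,t_j}:=\sum_{\substack{Q_1\ast Q_2=R\\ Q_1,Q_2\ne\epsilon}}\pi_{Q_1}\!\left(\mfn{X}{t_{j-1},u}\right)\otimes\pi_{Q_2}\!\left(\mfn{X}{u,t_j}\right),
\end{equation*}
all of whose factors have $\Pi$-degree $<s_{n+1}$ — the blocks flanking the modified one, being themselves of degree $<s_{n+1}$, can contribute only their degree-zero parts to the total degree $s_{n+1}$ — and are hence controlled by the induction hypothesis.

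The bound for the new level then follows by telescoping over partitions. Using submultiplicativity of the tensor norms, the induction hypothesis, the $k$-fold neo-classical inequality (enlarging the sum over prefix--suffix splits of $R$ to the full product of lattice ranges and factoring over directions, each factor being a one-dimensional neo-classical sum with exponent $1/p_i\le 1$), and super-additivity of $\omega$ (so $\omega(t_{j-1},u)+\omega(u,t_j)\le\omega(t_{j-1},t_j)$), one gets
\begin{equation*}
\left\Vert\Delta^R_{t_{j-1},u,t_j}\right\Vert\le \sum_{Q_1\ast Q_2=R}\frac{\omega(t_{j-1},u)^{\degg_{\Pi}(Q_1)}\,\omega(u,t_j)^{\degg_{\Pi}(Q_2)}}{\beta^{2k}\,\Gamma_{\Pi}(Q_1)\,\Gamma_{\Pi}(Q_2)}\le \frac{p_1^2\cdots p_k^2}{\beta^{2k}\,\Gamma_{\Pi}(R)}\,\omega(t_{j-1},t_j)^{s_{n+1}}.
\end{equation*}
Now, starting from $D$ and repeatedly deleting the point whose removal merges two adjacent intervals of least combined $\omega$-mass, a pigeonhole count against super-additivity shows that, at the stage where the current partition has $r$ points, this mass equals $\omega(s,t)$ if $r=3$ and is at most $2\omega(s,t)/(r-2)$ if $r\ge 4$. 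Telescoping down to $\{s,t\}$, where $\mfnc{X}{R,\{s,t\}}{s,t}=0$, gives
\begin{equation*}
\left\Vert\mfnc{X}{R,D}{s,t}\right\Vert\le \frac{p_1^2\cdots p_k^2}{\beta^{2k}\,\Gamma_{\Pi}(R)}\left(1+\sum_{r=4}^{\infty}\left(\frac{2}{r-2}\right)^{s_{n+1}}\right)\omega(s,t)^{s_{n+1}}.
\end{equation*}
For $r\ge 4$ one has $2/(r-2)\le 1$, so each summand is non-increasing in the exponent and dominated by its value at $s_{m^*+1}$; the series converges because $s_{m^*+1}>1$, and $1+\sum_{r\ge 4}(2/(r-2))^{s_{m^*+1}}\le 1+\sum_{r\ge 3}(2/(r-2))^{s_{m^*+1}}$. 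The assumed lower bound on $\beta$ is thus exactly what reduces the right-hand side to $\omega(s,t)^{s_{n+1}}/(\beta^k\Gamma_{\Pi}(R))$, uniformly in $D$ and in the level.

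Finally, the same cross-term estimate applied to a partition and a common refinement shows $\{\mfnc{X}{R,D}{s,t}\}_D$ is Cauchy as the mesh tends to zero (now the mesh enters to a power $s_{n+1}>1$), so $\mfnc{X}{R}{s,t}:=\lim_{|D|\to 0}\mfnc{X}{R,D}{s,t}$ exists, inherits the displayed bound, is continuous in $(s,t)$, and is multiplicative (split $D$ at an interior point and pass to the limit). Uniqueness is standard: if $\mfn{Y}{}$ is another extension agreeing with $\mfn{X}{}$ up to level $s_n$, then for $\degg_{\Pi}(R)=s_{n+1}$ and any partition one has $\mfnc{X}{R}{s,t}-\mfnc{Y}{R}{s,t}=\sum_i(\mfnc{X}{R}{t_{i-1},t_i}-\mfnc{Y}{R}{t_{i-1},t_i})$, the cross terms cancelling since they live at levels $<s_{n+1}$; bounding each summand by $2\omega(t_{i-1},t_i)^{s_{n+1}}/(\beta^k\Gamma_{\Pi}(R))$ and letting the mesh vanish kills the difference, again because $s_{n+1}>1$. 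The principal obstacle is the combinatorial estimate of the cross term: identifying the correct $k$-dimensional form of the neo-classical inequality and arranging the pigeonhole/telescoping so that the emerging constant is uniform over the infinitely many levels $s_{n+1}$ — which is precisely what pins down the admissible values of $\beta$.
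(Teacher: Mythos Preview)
Your proposal is correct and follows exactly the approach the paper indicates: the paper does not prove this theorem in the text but defers to \cite{lggy_thesis}, stating only that the argument adapts Lyons' extension theorem for $p$-rough paths \cite{rp_paper}, and your induction over the ordered set $S^{\Pi}$ with the partition-product construction, drop-a-point estimate via a $k$-fold neo-classical inequality, and telescoping with the pigeonhole bound is precisely that adaptation. The identification of the constant --- factoring the cross-term over directions to obtain $p_1^2\cdots p_k^2$ and then majorising the telescoping series at the worst exponent $s_{m^*+1}$ --- is exactly what produces the stated admissible range for $\beta$.
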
 
A proof of this theorem based on the proof of the extension theorem of $p$-rough paths (ref. \cite{rp_paper}) is derived in \cite{lggy_thesis}. 

\begin{dfn}[$\Pi$-rough paths]\label{_dfn_PiRP}
A \emph{$\Pi$-rough path} in $V$ is a continuous $\Delta_T\to \TPi{1}(V)$ multiplicative functional $\mfn{X}{}$ with finite $\Pi$-variation controlled by some control $\omega$. 

The space of $\Pi$-rough paths is denoted by $\Omega_{\Pi}(V)$. 
\end{dfn}
\begin{dfn}\label{_ch2_dfn_p_var_metric}
Let $C_{0,\Pi}\left(\Delta_T,\TPi{1}(V)\right)$ denote the space of all continuous functions from the simplex $\Delta_T$ into the truncated tensor algebra $\TPi{1}(V)$ with finite $\Pi$-variation. The $\Pi$-variation metric $d_{\Pi-\text{\protect\textup{var}}}$ on this linear space if defined as follows
\begin{equation*} 
d_{\Pi-\text{\protect\textup{var}}}(\mfn{X}{},\mfn{Y}{}):=\max_{R\in\APi{1}}\sup_{\mathcal{D}\in\mathcal{P}([0,T])}\left(
\sum_{\mathcal{D}}\left\Vert
\pi_R\left(\mfn{X}{t_{l-1},t_l}-\mfn{Y}{t_{l-1},t_l}\right)
\right\Vert^{1/\degg_{\Pi}(R)}
\right)^{\degg_{\Pi}(R)}
\end{equation*}
%
\end{dfn}

The following subset of the space of $\Pi$-rough paths is crucial for our further analysis.

\begin{dfn}[Geometric $\Pi$-rough path] A \emph{geometric $\Pi$-rough
    path} is a $\Pi$-rough path which can be expressed as a limit of
  $(1)$-rough paths\footnote{Here $(1)$ denotes the $1$-tuple with the
  single element $1$.} (or \emph{smooth} rough paths) in the $\Pi$-variation distance. The space of geometric $\Pi$-rough paths in $V$ is denoted by $G\Omega_{\Pi}(V)$.
\end{dfn}

\begin{rem}
In the special case, when $k=1$ and $\Pi=(p)$ for some $p\ge 1$, we will use the simplified notation: "finite $p$-variation", "$p$-rough paths", "$d_{p-\text{\protect\textup{var}}}$-distance" and "geometric $p$-rough paths". A direct definitions for these terms can be found in \cite{rp_paper} and \cite{rp_notes}. Furthermore for a $1$-multi index $R$ with length $j$, we will use the notation $\pi_j=\pi_R$, and we will write $T^{i}(V)$ for $\TPi{i/p}(V)$.
\end{rem}

\section{Integration with respect to $\Pi$-rough paths}\label{sec:Integration}
Lyons \cite{rp_paper} introduced integrals of \lipg one-forms along $p$-rough paths for $\gamma>p-1$. In this section, we introduce \lipGP one-forms (Definition \ref{_dfn_lipGP}) and integrals of \lipGP one-forms along $\Pi$ rough paths (Theorem \ref{_thm_lipGP_Integration}). 

First we define the $s$-symmetric maps for $s\in S^{\Pi}$.  

\begin{dfn}[$s$-symmetric maps] Let $i$ be a positive integer and
  $x\in V^{\otimes i}$. Let $x^{(i)}$ denote the $i$-symmetric part of
  $x$. Let $q\in(0,\infty]$, $X\in \TPi{q}(V)$ and $s\in S^{\Pi}$. Then the $s$-symmetric part of $X$ is defined as
\[
X^{(s)}:=\sum_{\degg_{\Pi}(R)=s}\pi_R\left(\sum_{i\in\mathbb{N}} \pi_i(X)^{(i)}\right). 
\]
A map $f$ defined on $\TPi{q}(V)$ is \emph{$s$-symmetric}, if for all $X\in\TPi{q}(V)$
\[
f(X)=f\left(X^{(s)}\right).
\]
\end{dfn}

Now, we can give the definition of \lipGP one-forms. 

\begin{dfn}[\lipGP one-forms]\label{_dfn_lipGP}
Let $\Pi=(p_1,\dots,p_k)$ and $\Gamma=(\gamma_1,\dots,\gamma_k)$ be
$k$-tuples with positive components, and let $F$ be a closed subset of $V$, $W$ be a Banach space. The function $\alpha:F\to L(V,W)$ is a $(\Pi,\Gamma)$-Lipschitz one-form on $F$ if $\alpha(u)=\sum_{i=1}^k\alpha_i(u)\circ\pi_{V_i}$ such that $\alpha_i:F\to L(V^i,W)$ and for each $i$ and $s_m<\gamma_i$ (where $s_m$ is the $m^{\text{th}}$ element in the ordered set $S^{\Pi}$) there exist functions $\alpha_i^{s_m}:F\to L\left(V^{(\Pi,s_m)},L(V^i,W)\right)$ taking values in the space of $s_m$-symmetric maps satisfying
\begin{align*}
\alpha_i^{s_m}(y)(v)=\sum_{s_m\le s_n<\gamma_i}\alpha_i^{s_n}(x)
	\left(
		v\otimes \sum_{\degg_{\Pi}(R)=s_n-s_m} \frac{(x-y)_R}{\Vert R\Vert!}
	\right)
	+R_i^{s_m}(x,y)(v)
\end{align*}
for all $x,y\in F$ and $v\in V^{\otimes(\Pi,s_m)}$, where $R_i^{s_m}:F\times F\to L\left(V^{(\Pi,s_m)},L(V^i,W)\right)$ with
\begin{align}
\Vert R_i^{s_m}(x,y) \Vert \le M\sum_{j=1}^k \Vert \pi_{V^j}(x-y)\Vert^{(\gamma_i-s_m)p_j}.\label{_eq_remainder_bound}
\end{align} 
\end{dfn}

In addition to the above definition and for practical reasons we introduce the functions $\alpha^{s_m}:F\to L\left(V^{(\Pi,s_m)},L(V,W)\right)$ for $s_m<\max_{1\le i\le k}\gamma_i=\gamma_{\max}$ defined by
\begin{equation*}
\alpha^{s_m}(v)(u)=\sum_{i, s_m<\gamma_i}\alpha^{s_m}_i(v)(u)\circ\pi_{V^i}, \  \forall v\in F, \forall u\in V^{(\Pi,s_m)}.
\end{equation*}
Note that $\alpha^{s_m}$ takes $s_m$-symmetric linear maps as values. Furthermore we introduce the functions $R_{s_m}:F\times F \to L\left(V^{\otimes(\Pi,s_m)},L(V,W)\right)$ defined by
\begin{equation}
R^{s_m}(x,y)(u)=\sum_{i, s_m<\gamma_i}R^{s_m}_i(x,y)(u)\circ\pi_{V^i}, \ \forall x,y\in F, \forall u\in V^{(\Pi,s_m)}.\label{_dfn_LipPiGammaR}
\end{equation}

\begin{rem}
Note that for $k=1$, $\Pi=(p)$ and $\Gamma=(\hat{\gamma})$ Definition \ref{_dfn_lipGP} simplifies to the classical definition of \lipg functions (ref. \cite{rp_paper}) for $\gamma=\hat{\gamma}p$, although the notation is slightly different. 
\end{rem}

\begin{rem}
The above definition can be interpreted as a "decomposition and partial reconstruction" of the classical \lipg functions. In the classical definition $f^j$ (i.e. the $j^{\text{th}}$ term in the expansion) is an $F\to L\left( V^{\otimes j}, L(V,W)\right)$-valued function, which can be decomposed as 
\begin{align*}
f^j(x)=\sum_{\Vert R\Vert=j} f^{j,R}(x)\circ \pi_{V^{\otimes R}}
\end{align*}
for $x\in F$, where $f^{j,R}(x)$ is a linear function in $L(V^{\otimes R},L(V,W))$. In the above definition these $f^{j,R}$ functions are grouped and summed by the degree of $R$ 
leaving out those that have degree corresponding to multi-index with degree greater than a certain value ($\gamma_i$). Similar decomposition and partial reconstruction is done with the remainder terms. Note that the condition \eqref{_eq_remainder_bound} on the remainder term are weaker compared to the homogeneous case ($k=1$, $\Pi=(p_{\max}), \Gamma=(\hat{\gamma})$). 
\end{rem}

\begin{rem}
In the special case when all the multi-indices $R$ of degree less than $\gamma_i$ are of the form $(j,\dots,j)$ for some $1\le j\le k$ (for example $k=2$ and $\frac{1}{p_1}+\frac{1}{p_2}>\gamma_j$ for $j=1$ or $2$) the term
\[
v\otimes \sum_{\degg_{\Pi}(R)=s_n-s_m} \frac{(x-y)_R}{\Vert R\Vert!}
\]
for $v\in V^{\otimes(\Pi,s_m)}$ is actually homogeneous, i.e. lies in $(V^j)^{\otimes n}$ where $n=s_m p_j$. In this case the condition (in the above definition) on $\alpha_i$ is equivalent to the following. For each $j=1,\dots,k$ the function 
\[
 x\mapsto\alpha_i((y^1,\dots,y^{j-1},x,y^{j+1},\dots,y^k)), \ x\in V^j
\]
for fixed
 \[
 (y^1,\dots,y^{j-1},j^{j+1},\dots,j^k)\in V^1\oplus\cdots\oplus V^{j-1}\oplus V^{j+1}\oplus\cdots V^k 
\] 
is $\text{Lip}^{\gamma_ip_j}$ (in the classical sense) with Lipschitz norm uniform in $(y^1,\dots,y^{j-1},j^{j+1},\dots,j^k)$. 

In the case, when $\frac{1}{p_j}>\gamma_i$, the $\text{Lip}^{\gamma_ip_j}$ condition is equivalent to $\gamma_ip_j$-H\"older continuity.  
\end{rem}

In Theorem \ref{_thm_lipGP_Integration}, an integral approximation
formula is introduced and we prove the existence of a unique rough path associated with the integral approximating formula. This unique rough path is referred to as the integral of a one-form along a $\Pi$-rough path. To prove the existence and uniqueness of the rough path associated with the integral approximating formula we reformulate the problem in terms of almost $p$-rough paths.

\begin{dfn}[Almost $p$-rough path]\label{_dfn_AlmostpRP} Let $p\ge 1$ be a real number and $\omega$ a control. A function $\mfn{Y}{}:\Delta_T\to T^{(\lfloor p \rfloor)}(V)$ as an \emph{almost $p$-rough path} if 
\begin{enumerate}[(i)]
\item $\mfn{Y}{}$ has finite $p$-variation controlled by $\omega$, i.e.
\begin{equation*}
\left\Vert \pi_i\left(\mfn{Y}{s,t}\right)\right\Vert\le\frac{\omega(s,t)^{\frac{i}{p}}}{\beta\left(\frac{i}{p}\right)!}
\ \forall i=1,\dots,\lfloor p\rfloor, \ \forall(s,t)\in\Delta_T
\end{equation*}
\item $\mfn{Y}{}$ is \emph{almost multiplicative} in the sense
\begin{equation*}
\left\Vert \pi_i\left(\mfn{Y}{s,u}\otimes\mfn{Y}{u,t}-\mfn{Y}{s,t}\right)\right\Vert\le\omega(s,t)^{\theta} \ \forall i=1,\dots,\lfloor p\rfloor, \ \forall s,u,t \in[0,T], \ s\le u\le t
\end{equation*}
and for some $\theta>1$.
\end{enumerate}
\end{dfn}

Almost rough paths have the crucial property that each one of them determines a rough path in the sense of Theorem \ref{_thm_RPtoAlmostRP}. This property is exploited when we derive the existence and uniqueness of integrals along $\Pi$-rough paths. 

\begin{thm}\label{_thm_RPtoAlmostRP}
Let $p\ge 1$ be a real number and $\omega$ be a control. Let $\mfn{Y}{}:\Delta_T\to T^{(\lfloor p\rfloor)}(V)$ be an almost $p$-rough path with $p$-variation controlled by $\omega$ as in Definition \ref{_dfn_AlmostpRP}. 
Then there exists a unique $p$-rough path $\mfn{X}{}:\Delta_T\to T^{(\lfloor p\rfloor)}(V)$ such that
\begin{equation*}
\sup_{
\begin{smallmatrix}
0\le s<t\le T\\
i=0,\dots,\lfloor p \rfloor
\end{smallmatrix}
}
\frac{\left\Vert \pi_i\left(\mfn{X}{s,t}-\mfn{Y}{s,t}\right)\right\Vert}{\omega(s,t)^{\theta}}< +\infty.
\end{equation*}
Moreover, there exists a constant $K$ depending only on $p$,$\theta$ and $\omega(0,T)$, such that the supremum is smaller than $K$, and the $p$-variation of $\mfn{X}{}$ is controlled by $K\omega$. 
\end{thm}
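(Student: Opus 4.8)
The plan is to adapt the classical argument of Lyons (the passage from an almost $p$-rough path to a genuine $p$-rough path, as in \cite{rp_notes}) essentially verbatim, since the statement is purely about the homogeneous single-parameter case $T^{(\lfloor p\rfloor)}(V)$ and does not involve the $\Pi$-structure at all. First I would reduce to constructing $\mfn{X}{}$ on $\Delta_T$ by a dyadic-partition limiting procedure. For a partition $\mathcal{D}=\{s=t_0<t_1<\cdots<t_n=t\}$ of $[s,t]$, define the iterated product $\mfn{Y}{}^{\mathcal{D}}_{s,t}:=\mfn{Y}{t_0,t_1}\otimes\mfn{Y}{t_1,t_2}\otimes\cdots\otimes\mfn{Y}{t_{n-1},t_n}$ in $T^{(\lfloor p\rfloor)}(V)$. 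The key lemma is that refining a partition by removing one point $t_j$ changes $\mfn{Y}{}^{\mathcal{D}}_{s,t}$ by a controlled amount: using almost multiplicativity one estimates, for each level $i$,
\begin{equation*}
\left\Vert \pi_i\left(\mfn{Y}{}^{\mathcal{D}}_{s,t}-\mfn{Y}{}^{\mathcal{D}\setminus\{t_j\}}_{s,t}\right)\right\Vert \le C\,\omega(t_{j-1},t_{j+1})^{\theta},
\end{equation*}
where $C$ depends on $p$ and the level-wise bounds; here one uses the tensor-norm submultiplicativity and the control bounds of part (i) to absorb the products of the other factors. Then, choosing the point $t_j$ so that $\omega(t_{j-1},t_{j+1})\le \tfrac{2}{n-1}\omega(s,t)$ (possible by superadditivity of $\omega$), one peels off points one at a time and sums the resulting geometric-type series $\sum_{n\ge 2}\big(\tfrac{2}{n-1}\big)^{\theta}$, which converges because $\theta>1$. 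This yields that $\mfn{Y}{}^{\mathcal{D}}_{s,t}$ converges as the mesh of $\mathcal{D}$ goes to zero to a limit $\mfn{X}{s,t}$ independent of the approximating sequence, with
\begin{equation*}
\left\Vert \pi_i\left(\mfn{X}{s,t}-\mfn{Y}{s,t}\right)\right\Vert \le K\,\omega(s,t)^{\theta},\quad i=0,\dots,\lfloor p\rfloor,
\end{equation*}
for a constant $K=K(p,\theta,\omega(0,T))$; this is exactly the asserted finiteness of the supremum together with the explicit bound.

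Next I would verify that $\mfn{X}{}$ is genuinely multiplicative: $\mfn{X}{s,u}\otimes\mfn{X}{u,t}=\mfn{X}{s,t}$ follows because concatenating a partition of $[s,u]$ with one of $[u,t]$ gives a partition of $[s,t]$, so the defining limits are compatible; and $\pi_\epsilon\mfn{X}{s,t}=1$ is clear since every factor has unit scalar part. Continuity of $\mfn{X}{}$ on $\Delta_T$ follows from the estimate above together with continuity of $\mfn{Y}{}$ and uniform continuity of $\omega$. For the finite $p$-variation of $\mfn{X}{}$ controlled by $K\omega$: from $\|\pi_i(\mfn{X}{s,t})\|\le \|\pi_i(\mfn{Y}{s,t})\|+K\omega(s,t)^\theta$ and the level-$i$ bound on $\mfn{Y}{}$ from part (i), together with $\theta>1\ge i/p$ on the relevant range and $\omega(s,t)\le\omega(0,T)$, one obtains $\|\pi_i(\mfn{X}{s,t})\|\le \tfrac{(K\omega(s,t))^{i/p}}{\beta(i/p)!}$ after enlarging $K$ appropriately; this is where the dependence of $K$ on $\omega(0,T)$ enters.

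For uniqueness, suppose $\mfn{X}{}$ and $\mfn{X}{}'$ are two $p$-rough paths each lying within $O(\omega^{\theta})$ of $\mfn{Y}{}$. Then their difference satisfies $\|\pi_i(\mfn{X}{s,t}-\mfn{X}{}'_{s,t})\|\le C'\omega(s,t)^{\theta}$ with $\theta>1$, and a standard induction on the level $i$ using multiplicativity of both (writing $\mfn{X}{s,t}-\mfn{X}{}'_{s,t}$ via a telescoping sum over a partition and the product structure at lower levels) shows that a genuine $p$-rough path is uniquely pinned down by any such super-$1$ approximation — the point being that at the first level multiplicativity forces additivity, hence the $\omega^{\theta}$-bound with $\theta>1$ and superadditivity force the level-$1$ difference to vanish, and then one bootstraps upward. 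I expect the main obstacle to be the bookkeeping in the one-point-removal estimate: getting the constant $C$ in a form that is uniform across the finitely many levels $i\le \lfloor p\rfloor$ and then tracking how it feeds into $K$ so that the final control is genuinely $K\omega$ with $K$ depending only on $p,\theta,\omega(0,T)$. None of this is conceptually new — it is the proof of \cite[Theorem on almost rough paths]{rp_notes} specialized to our notation — so I would present it as such, giving the one-point-removal lemma and the summation in detail and referring to \cite{rp_notes, rp_book} for the routine verifications.
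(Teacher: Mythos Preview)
Your proposal is correct and is precisely the classical Lyons argument from \cite{rp_paper,rp_book,rp_notes}; the paper itself does not give a proof of this theorem but simply refers the reader to \cite{rp_paper} and \cite{rp_book}, so your outline is in fact more detailed than what appears in the paper.
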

The reader is referred to \cite{rp_paper} and \cite{rp_book} for proof. 

Although it is not required for the main result of this section, Theorem \ref{_thm_RPtoAlmostRP} can be extended for general $k>1$ and $k$-tuple $\Pi$ as follows. 

\begin{thm}\label{_ch2_thm_AlmostPiRpExt}
Let the Banach space $V$ be of the form $V=V^1\oplus\cdots\oplus V^k$ for some Banach spaces $V^1,\dots, V^k$.
Let $\Pi=(p_1,\dots,p_k)$ denote a $k$-tuple as in Definition \ref{_dfn_PiBasics} and let $\omega$ be a control. Let 
the functional $\mfn{Y}{}:\Delta_T\to \TPi{1}(V)$ be a $\theta$-almost $\Pi$-rough path controlled by $\omega$, i.e.

\begin{enumerate}[(i)]
\item it has a finite $\Pi$-variation controlled by $\omega$:
\begin{equation*}
\left\Vert \mfnc{X}{R}{s,t}\right\Vert \le \frac{\omega(s,t)^{\degg_{\Pi}(R)}}{\beta^k\Gamma_{\Pi}(R)}
\end{equation*}
for all $(s,t)\in\Delta_T$ and for all multi-index $R\in\APi{1}$. 
\item it is \emph{almost-multiplicative}, i.e. there exists $\theta>1$ such that
\begin{equation*}
\left\Vert\pi_R\left(\mfn{X}{s,u}\otimes\mfn{X}{u,t}-\mfn{X}{s,t}\right)\right\Vert\le\omega(s,t)^{\theta} \ \forall s<u<t\in[0,T], \ \forall R\in\APi{1}.
\end{equation*}
\end{enumerate} 

Then there exists a unique $\Pi$-rough path $\mfn{X}{}:\Delta_T\to\TPi{1}(V)$ such that 
\begin{equation}
\sup_{
\begin{smallmatrix}
0\le s<t\le T\\
R\in\APi{1}
\end{smallmatrix}
}
\frac{
\left\Vert
\pi_R\left(\mfn{X}{s,t}-\mfn{Y}{s,t}\right)
\right\Vert
}{
\omega(s,t)^{\theta}
}<+\infty.\label{_eq_AlmostRPSup}
\end{equation}
Moreover, there exists a constant $K$ which depends only on $\Pi$, $\theta$ and $\omega(0,T)$ such that the supremum \eqref{_eq_AlmostRPSup} is smaller than $K$ and the $\Pi$-variation of $X$ is controlled by $K\omega$.
\end{thm}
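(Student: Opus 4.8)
The plan is to reduce Theorem \ref{_ch2_thm_AlmostPiRpExt} to the single-parameter case, Theorem \ref{_thm_RPtoAlmostRP}, by treating a $\Pi$-rough path as an ordinary $p$-rough path for $p:=\lfloor\max_i p_i\rfloor\cdot\text{(something large enough)}$; more precisely, pick any real $p$ large enough that every $R\in\APi{1}$ satisfies $\Vert R\Vert\le\lfloor p\rfloor$, so that $\TPi{1}(V)$ is a quotient of $T^{(\lfloor p\rfloor)}(V)$ and any $\TPi{1}(V)$-valued functional can be viewed inside $T^{(\lfloor p\rfloor)}(V)$ by reading off its components indexed by $R\in\APi{1}$ and setting the remaining tensor components to zero. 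The subtlety is that this embedding does not commute with tensor multiplication, so the naive lift of $\mfn{Y}{}$ need not even be almost multiplicative in the $T^{(\lfloor p\rfloor)}(V)$ sense. Hence I would not apply Theorem \ref{_thm_RPtoAlmostRP} as a black box; instead I would re-run its proof (the Lyons construction of a rough path from an almost rough path, via the dyadic/partition refinement argument in \cite{rp_paper,rp_book}) directly in the $\Pi$-graded setting, and the statement of Theorem \ref{_thm_PiExtension} is the technical engine that makes the relevant sums converge.

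The key steps, in order, are as follows. First, fix the control $\omega$ and the constant $\beta$ as in the hypotheses, and set up the approximation scheme: for a partition $\mathcal{D}=\{0=t_0<\cdots<t_N=T\}$ of $[s,t]$ define $\mfn{Y}{}_{\mathcal{D}}:=\mfn{Y}{t_0,t_1}\otimes\cdots\otimes\mfn{Y}{t_{N-1},t_N}$ in $\TPi{1}(V)$, and show that as the mesh is refined the net $\{\pi_R(\mfn{Y}{}_{\mathcal{D},s,t})\}$ is Cauchy for each $R\in\APi{1}$, using the standard "drop one point" estimate: removing the point $t_j$ from $\mathcal{D}$ changes $\pi_R(\mfn{Y}{}_{\mathcal{D},s,t})$ by a quantity bounded, via almost multiplicativity and the $\Pi$-variation bound, by a multiple of $\omega(t_{j-1},t_{j+1})^{\theta}$ times combinatorial factors; super-additivity of $\omega$ and the usual choice of the point with $\omega(t_{j-1},t_{j+1})\le\frac{2}{N-1}\omega(s,t)$ then give a geometrically summable bound, with the convergence of $\sum_{r\ge3}(2/(r-2))^{\theta}$ (note $\theta>1$) playing exactly the role that $\sum_{r\ge3}(2/(r-2))^{s_{m^*+1}}$ plays in Theorem \ref{_thm_PiExtension}. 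Second, call the limit $\mfn{X}{s,t}$; by construction $(s,t)\mapsto\mfn{X}{s,t}$ is multiplicative (limits of $\mfn{Y}{}_{\mathcal{D}}$ over nested partitions are genuinely multiplicative), and the same telescoping estimate, applied with $\mathcal{D}=\{s,t\}$ as the trivial partition, yields $\Vert\pi_R(\mfn{X}{s,t}-\mfn{Y}{s,t})\Vert\le C_{\Pi,\theta}\,\omega(s,t)^{\theta}$, which is exactly \eqref{_eq_AlmostRPSup} with a constant $K$ depending only on $\Pi$, $\theta$ and $\omega(0,T)$. Third, derive the $\Pi$-variation bound for $\mfn{X}{}$: from $\omega(s,t)^{\theta}\le\omega(0,T)^{\theta-1}\omega(s,t)$ one upgrades the finite-$\Pi$-variation bound of $\mfn{Y}{}$ to one for $\mfn{X}{}$ with control $K\omega$ (possibly after invoking Theorem \ref{_thm_PiExtension} to see that controlling the components $R\in\APi{1}$ with a modified constant controls the extension, i.e. that $\mfn{X}{}$ is a bona fide $\Pi$-rough path). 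Fourth, prove uniqueness: if $\mfn{X}{}$ and $\widehat{\mfn{X}{}}$ are two $\Pi$-rough paths both satisfying \eqref{_eq_AlmostRPSup}, then their difference is $O(\omega^{\theta})$, and a now-standard induction on $\degg_{\Pi}(R)$ (ascending through the ordered set $S^{\Pi}$) using multiplicativity of both forces $\pi_R(\mfn{X}{s,t})=\pi_R(\widehat{\mfn{X}{}}_{s,t})$ for every $R$ — the base case being the level-$1$ components, which already lie in $\APi{1}$ and are pinned down because $\theta>1$.

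I expect the main obstacle to be the bookkeeping in the "drop one point" estimate in the $\Pi$-graded setting: when one expands $\pi_R(\mfn{Y}{t_{j-1},t_j}\otimes\mfn{Y}{t_j,t_{j+1}})$ and compares with $\pi_R(\mfn{Y}{t_{j-1},t_{j+1}})$, the error at multi-index $R$ involves all ways of splitting $R=R'\ast R''$ with $R'$ and $R''$ nontrivial, and one must check that the product of a $\Pi$-variation factor $\omega^{\degg_{\Pi}(R')}$ with the almost-multiplicativity factor $\omega^{\theta}$ (and the tensor-norm submultiplicativity $\Vert a\otimes b\Vert\le\Vert a\Vert\Vert b\Vert$) still beats $\omega(s,t)$ to a power strictly greater than $1$ after summing over $j$ — this is where $\theta>1$ is essential and where the precise form of $\Gamma_{\Pi}$ and the combinatorial constant from Theorem \ref{_thm_PiExtension} must be tracked so that the resulting $K$ genuinely depends only on $\Pi$, $\theta$, $\omega(0,T)$. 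A secondary, more routine obstacle is verifying that the limiting functional is continuous on $\Delta_T$, which follows from uniform continuity of $\omega$ together with the uniform-in-partition estimates, exactly as in the proof of Theorem \ref{_thm_RPtoAlmostRP}.
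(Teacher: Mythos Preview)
Your proposal is correct and matches the paper's own approach: the paper does not give a proof in the body, but states that ``the proof of Theorem \ref{_ch2_thm_AlmostPiRpExt} is sketched in \cite{lggy_thesis} and based on the proof of Theorem \ref{_thm_RPtoAlmostRP} as derived in \cite{rp_notes}'', which is exactly the strategy you outline --- re-running the Lyons partition-refinement (``drop one point'') construction directly in the $\Pi$-graded setting rather than reducing to the homogeneous case as a black box. Your identification of why a naive embedding into $T^{(\lfloor p\rfloor)}(V)$ fails (incompatibility with the tensor product) and your list of steps (Cauchy estimate via almost-multiplicativity and super-additivity of $\omega$, multiplicativity of the limit, the $O(\omega^\theta)$ closeness bound, the $\Pi$-variation control, and uniqueness by induction along the ordered set $S^{\Pi}$) are precisely the adaptations one expects, so there is nothing to correct.
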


The proof of Theorem \ref{_ch2_thm_AlmostPiRpExt} is sketched in \cite{lggy_thesis} and based on the proof of Theorem \ref{_thm_RPtoAlmostRP} as derived in \cite{rp_notes}.

Finally, we can state the main theorem of the section. 

\begin{thm}[Integration of \lipGP one-forms]\label{_thm_lipGP_Integration}
Let $V$ and $W$ be Banach spaces, such that $V=V^1\oplus\cdots\oplus V^k$ for some Banach spaces $V^1,\dots, V^k$.
Let $\Pi=(p_1,\dots,p_k)$ denote a $k$-tuple as in Definition \ref{_dfn_PiBasics} with $p_{\max}=\max_{1\le i\le k}p_i$ and let $\omega$ be a control.
Let $\mfn{Z}{}:\Delta_T\to\TPi{1}(V)$ be a geometric $\Pi$-rough path controlled by $\omega$.
 Let $\Gamma=(\gamma_1,\dots,\gamma_k)$ be a real $k$-tuple such that $\gamma_i>1-1/p_i$ for $i=1,\dots,k$ and $\gamma_{\max}=\max_{1\le i\le k}\gamma_i$ . Finally let  $\alpha:V\to L(V,W)$ be a \lipGP function as in Definition \ref{_dfn_lipGP}. 

Then $\mfn{Y}{}:\Delta_T\to T^{((p_{\max}),1)}(W)$ defined for all $(s,t)\in\Delta_T$ 
by

\begin{eqnarray}
&&\mfnc{Y}{n}{s,t}:=\pi_{W^{\otimes n}}\left(\mfn{Y}{s,t}\right)
=\nonumber\\
&& 
\sum_{
s_{m_1}+\cdots+s_{m_n}<\gamma_{\max}
} 
\alpha^{s_{m_1}}\left(\pi_V\left(\mfn{Z}{0,s}\right)\right)\otimes\cdots\otimes
\alpha^{s_{m_n}}\left(\pi_V\left(\mfn{Z}{0,s}\right)\right)
\sum_{
\begin{smallmatrix}
R_1,\dots,R_n\in\mathcal{A}^k\\
\degg_{\Pi}(R_i-)=s_i, \ i=1,\dots, n \\
\sigma\in OS( \Vert R_1\Vert,\dots, \Vert R_n\Vert )
\end{smallmatrix}}
\sigma^{-1}\pi_{R_1\ast\cdots\ast R_n}\left(\mfn{Z}{s,t}\right)\nonumber\\
&& \label{_eq_IntegralApproxSum}
\end{eqnarray}
is an almost 
$p_{\max}$-rough path,
where $OS( k_1,\dots, k_n )$ denotes the subset
of the symmetric group $S_K$ for $K=k_1+\cdots+k_n$, such that
for all $\sigma\in OS(k_1,\dots, k_n )$, we
have
\[
\sigma(1)<\sigma(2)<\cdots<\sigma(k_1), \
\sigma(k_1+1)<\cdots<\sigma(k_1+k_2),\ \dots,
\]
\[
\sigma(K-k_n+1)<\cdots<\sigma(K), \text{ and } \sigma(k_1)<\sigma(k_2)<\cdots<\sigma(k_n),
\]
moreover with a slight abuse of notation for $x_1\otimes \cdots\otimes
x_K\in V^{R_1\ast\cdots\ast R_n}$ and $\sigma\in OS( \Vert
R_1\Vert,\dots, \Vert R_n\Vert )$ we define
\[
\sigma (x_1\otimes \cdots\otimes x_K) = x_{\sigma(1)}\otimes\cdots\otimes x_{\sigma(K)}.
\]
\end{thm}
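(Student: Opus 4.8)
The plan is to verify that $\mfn{Y}{}$ satisfies the two requirements of Definition~\ref{_dfn_AlmostpRP} with $p=p_{\max}$, namely a finite $p_{\max}$-variation bound and almost-multiplicativity with some exponent $\theta>1$; Theorem~\ref{_thm_RPtoAlmostRP} then produces the rough path associated with \eqref{_eq_IntegralApproxSum}, which is the claimed integral. Two elementary consequences of the hypotheses on $\mfn{Z}{}$ are used repeatedly. Being a multiplicative functional, $\mfn{Z}{}$ obeys Chen's relation $\pi_R(\mfn{Z}{s,t})=\sum_{R=P\ast Q}\pi_P(\mfn{Z}{s,u})\otimes\pi_Q(\mfn{Z}{u,t})$ for every $k$-multi-index $R$ and $s\le u\le t$, and in particular its level-one part is additive, $\pi_V(\mfn{Z}{0,u})=\pi_V(\mfn{Z}{0,s})+\pi_V(\mfn{Z}{s,u})$. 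Finite $\Pi$-variation gives $\Vert\pi_R(\mfn{Z}{s,t})\Vert\le\omega(s,t)^{\degg_\Pi(R)}/(\beta^k\Gamma_\Pi(R))$, and with submultiplicativity of the tensor norms and boundedness of $\omega$ on $\Delta_T$ it also makes the trace $\{\pi_V(\mfn{Z}{0,r}):r\in[0,T]\}$ a bounded subset of $V$, on which the coefficient maps $\alpha^{s_m}$ of \eqref{_eq_IntegralApproxSum} are bounded (this is where the \lipGP regularity of $\alpha$ controls the growth of its ``derivatives''). Finally, since $\mfn{Z}{}$ is geometric and each $\alpha^{s_m}$ is $s_m$-symmetric, feeding $\alpha^{s_m}$ a block of the signature of $\mfn{Z}{}$ only sees its symmetric part, which equals a divided power of $\pi_V(\mfn{Z}{})$.

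First I would handle the finite-variation bound. A generic summand of $\mfnc{Y}{n}{s,t}$ is the tensor product of factors $\alpha^{s_{m_j}}(\pi_V(\mfn{Z}{0,s}))$ --- uniformly bounded by the preceding paragraph --- with $\sigma^{-1}\pi_{R_1\ast\cdots\ast R_n}(\mfn{Z}{s,t})$, of norm $\le\omega(s,t)^{\degg_\Pi(R_1\ast\cdots\ast R_n)}/(\beta^k\Gamma_\Pi(R_1\ast\cdots\ast R_n))$. Because $\alpha^{s_{m_j}}$ feeds in a direction $i$ only when $s_{m_j}<\gamma_i$ while $\degg_\Pi(R_j-)=s_{m_j}$, each block has $\degg_\Pi(R_j)=s_{m_j}+1/p_{i_j}\ge 1/p_{\max}$, where $i_j$ denotes the direction of the last letter of $R_j$; hence $\degg_\Pi(R_1\ast\cdots\ast R_n)\ge n/p_{\max}$ and, $\omega$ being bounded, $\omega(s,t)^{\degg_\Pi(R_1\ast\cdots\ast R_n)}\le C\,\omega(s,t)^{n/p_{\max}}$. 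As the number of multi-index tuples and ordered shuffles entering \eqref{_eq_IntegralApproxSum} is finite and depends only on $\Pi,\Gamma$ and $n\le\lfloor p_{\max}\rfloor$, this yields $\Vert\mfnc{Y}{n}{s,t}\Vert\le C'\,\omega(s,t)^{n/p_{\max}}$; replacing the control $\omega$ by a large enough multiple converts this into Definition~\ref{_dfn_AlmostpRP}(i).

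The substance of the argument is almost-multiplicativity. Fixing $s\le u\le t$, I would expand $\pi_{W^{\otimes n}}(\mfn{Y}{s,u}\otimes\mfn{Y}{u,t})=\sum_{a+b=n}\mfnc{Y}{a}{s,u}\otimes\mfnc{Y}{b}{u,t}$ (with $\mfnc{Y}{0}{}:=1$), substitute Chen's relation for $\pi_{R_1\ast\cdots\ast R_n}(\mfn{Z}{s,t})$ inside $\mfnc{Y}{n}{s,t}$, and in each product $\mfnc{Y}{a}{s,u}\otimes\mfnc{Y}{b}{u,t}$ rewrite every factor $\alpha^{s_m}_i(\pi_V(\mfn{Z}{0,u}))$ coming from the right-hand factor $\mfnc{Y}{b}{u,t}$, via the Taylor-type identity of Definition~\ref{_dfn_lipGP}, in terms of the coefficients $\alpha^{s_n}_i(\pi_V(\mfn{Z}{0,s}))$, the correction tensors being built from $\pi_V(\mfn{Z}{s,u})$. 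The hard part is then the purely algebraic identity that the \emph{principal part} of $\sum_{a+b=n}\mfnc{Y}{a}{s,u}\otimes\mfnc{Y}{b}{u,t}$ --- keeping only the $\alpha^{s_n}_i(\pi_V(\mfn{Z}{0,s}))$-terms of these expansions and discarding the remainders $R_i^{s_m}$ --- coincides termwise with the part of (Chen-expanded) $\mfnc{Y}{n}{s,t}$ indexed by tuples whose degree sum stays below $\gamma_{\max}$. This is exactly what the combinatorics of \eqref{_eq_IntegralApproxSum} is built for: the ordered-shuffle sets $OS(\cdot)$, the weights $\sigma^{-1}$, the divided powers $1/\Vert R\Vert!$ produced by the Taylor expansion, and the $s_m$-symmetry of the coefficients (so that, $\mfn{Z}{}$ being geometric, symmetrised signature blocks act as divided powers of $\pi_V(\mfn{Z}{})$) must conspire so that concatenating the two partial integrals is compatible with the coproduct of $T(V)$. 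I would prove it by organising the summands on both sides according to the splitting of $\{1,\dots,n\}$ into the first $a$ and last $b$ slots together with the induced factorisation $R_j=P_j\ast Q_j$ and the interleaving of the two halves, and checking that the resulting shuffle/sign/divided-power bookkeeping is the one for which this holds --- essentially the identity underlying multiplicativity of iterated integrals, transported through the \lipGP structure.

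Finally I would estimate what survives in $\pi_{W^{\otimes n}}(\mfn{Y}{s,u}\otimes\mfn{Y}{u,t}-\mfn{Y}{s,t})$ after this cancellation; there are two types of term. One that retains a remainder factor $R_i^{s_m}(\pi_V(\mfn{Z}{0,s}),\pi_V(\mfn{Z}{0,u}))$ in a block of direction $i$ and degree $s_m+1/p_i$ is bounded, via \eqref{_eq_remainder_bound} and $\Vert\pi_{(l)}(\mfn{Z}{s,u})\Vert\lesssim\omega(s,t)^{1/p_l}$, by $\lesssim\omega(s,t)^{\gamma_i-s_m}$ times the norm of the accompanying signature block $\pi_Q(\mfn{Z}{u,t})$, which is $\lesssim\omega(s,t)^{s_m+1/p_i}$, hence by $\lesssim\omega(s,t)^{\gamma_i+1/p_i}$ up to further non-negative powers, and $\gamma_i+1/p_i>1$ by the hypothesis $\gamma_i>1-1/p_i$. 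A term of the second type --- one surviving in $\sum_{a+b=n}\mfnc{Y}{a}{s,u}\otimes\mfnc{Y}{b}{u,t}$ after re-expansion but whose effective degree-index sum reaches $\gamma_{\max}$, so that it has no counterpart in $\mfnc{Y}{n}{s,t}$ --- carries a product of signature tensors of $\mfn{Z}{s,u}$ and $\mfn{Z}{u,t}$ whose total $\Pi$-degree is that index sum plus at least $1/p_{\max}$, hence at least $\gamma_{\max}+1/p_{\max}>1$ (again $\gamma_i>1-1/p_i$, now for the direction with $p_i=p_{\max}$). As $\omega$ is bounded on $\Delta_T$, each such leftover is $\le C\,\omega(s,t)^{\theta}$ with $\theta$ the minimum of the finitely many exponents that occur, all $>1$; this is Definition~\ref{_dfn_AlmostpRP}(ii), so $\mfn{Y}{}$ is an almost $p_{\max}$-rough path and Theorem~\ref{_thm_RPtoAlmostRP} furnishes the associated $p_{\max}$-rough path, the desired integral. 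The sole genuine difficulty in this plan is the algebraic matching of the third step; the remainder is careful but routine estimation using the finite $\Pi$-variation of $\mfn{Z}{}$, the bound \eqref{_eq_remainder_bound}, and the inequalities $\gamma_i>1-1/p_i$.
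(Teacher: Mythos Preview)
Your proposal is correct and follows the same overall strategy as the paper: verify Definition~\ref{_dfn_AlmostpRP}(i) by bounding the individual summands of \eqref{_eq_IntegralApproxSum}, and establish (ii) by Taylor-expanding the coefficients $\alpha^{s_m}$ at the intermediate point $u$ via Definition~\ref{_dfn_lipGP}, so that the surviving terms all carry a factor bounded through \eqref{_eq_remainder_bound}, with the hypothesis $\gamma_i>1-1/p_i$ forcing the exponent $\theta=\min_i(\gamma_i+1/p_i)>1$.

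The paper organises the argument slightly differently in two respects, and both simplifications are worth knowing. First, instead of working with $\mfn{Y}{}$ (constraint $s_{m_1}+\cdots+s_{m_n}<\gamma_{\max}$) directly, it introduces the auxiliary functional $\mfnh{Y}{}$ carrying only the weaker constraint $s_{m_j}<\gamma_{\max}$ for each $j$ separately, proves \emph{that} is an almost $p_{\max}$-rough path, and then invokes the analogue of Proposition~4.10 of \cite{rp_notes} to conclude that $\mfn{Y}{}$ is one too, associated to the same rough path. This manoeuvre eliminates your ``second type'' of leftover (the overflow terms) entirely, since under the individual constraint the Taylor expansion never produces an index tuple outside the admissible range. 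Second, for the algebraic cancellation at the heart of (ii) the paper does not do the shuffle/divided-power bookkeeping on the rough path itself: it first establishes the identity for bounded-variation paths (Lemmas~\ref{_lem_PiIntLem1} and~\ref{_lem_PiIntLem3}, yielding the clean factorisation $\mfnh{Y}{s,u}\otimes\mfnh{Y}{u,t}-\mfnh{Y}{s,t}=\mfnh{Y}{s,u}\otimes\mfn{N}{s,u,t}$ with every summand of $\mfn{N}{}$ containing at least one remainder $R^{s_m}$), and then extends to geometric $\Pi$-rough paths by continuity in the $\Pi$-variation metric (Remark~\ref{_lem_PiIntRem2}). Your direct combinatorial route is legitimate and your degree count for the overflow terms is correct, but the smooth-path-plus-continuity device hands you the algebraic identity for free and is why the paper's proof is so short.
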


Theorem \ref{_thm_lipGP_Integration} leads to the following definition. 
\begin{dfn}[Integration of \lipGP one-forms]\label{_dfn_lipGP_Integration}
Let $V$ and $W$ be Banach spaces, such that $V=V^1\oplus\cdots\oplus V^k$ for some Banach spaces $V^1,\dots, V^k$.
Let $\Pi=(p_1,\dots,p_k)$ denote a $k$-tuple as in Definition \ref{_dfn_PiBasics} with $p_{\max}=\max_{1\le i\le k}p_i$ and let $\omega$ be a control.
Let $\mfn{Z}{}:\Delta_T\to\TPi{1}(V)$ be a geometric $\Pi$-rough path controlled by $\omega$.
 Let $\Gamma=(\gamma_1,\dots,\gamma_k)$ be a real $k$-tuple such that $\gamma_i>1-1/p_i$ for $i=1,\dots,k$. Finally let  $\alpha:V\to L(V,W)$ be a \lipGP function.

Let $\mfn{Y}{}:\Delta_T\to T^{((p_{\max}),1)}(W)$ be the almost $p_{\max}$-rough path defined by Theorem \ref{_thm_lipGP_Integration}. The unique $(p_{\max})$-rough path associated to $\mfn{Y}{}$ by Theorem \ref{_thm_RPtoAlmostRP} is called the integral of $\alpha$ along $\mfn{Z}{}$ and it is denoted by
\begin{equation*}
\int_{\cdot}^{\cdot}\alpha(\mfn{Z}{})\dd\mfn{Z}{}:\Delta_T\to T^{((p_{\max}),1)}(W).
\end{equation*}
\end{dfn}

\begin{rem}
In the general case the integral is a $p_{\max}$-rough path in the sense of \cite{rp_paper}. However, for special forms of the \lipGP one-form $\alpha$, the integral itself is a $\Pi$-rough paths. The reader is referred to \cite{lggy_thesis} for examples. 
\end{rem}

In the remaining part of the section, we present a proof of Theorem \ref{_thm_lipGP_Integration}. 

Equation \eqref{_eq_IntegralApproxSum} describes an integral approximating formula projected on $W^{\otimes n}$. 
The intuition behind this formula comes from integrals with respect to paths of finite length. 
In particular, let $Z:[0,T]\to V$ be a path of finite variation. For a multi-index $R=(r_1,\dots,r_l)$, let $\mfnc{Z}{R}{s,t}\in V^{\otimes R}$ be defined as
\begin{equation}
\mfnc{Z}{R}{s,t}=\int_{s<u_1<\cdots<u_l<t} d\pi_{r_1}(Z_{u_1})\otimes\cdots\otimes d\pi_{r_l}(Z_{u_l}).
\label{_eq_iterated_int}
\end{equation}
Furthermore, let the function $\mfnc{Y}{1}{}:\Delta_T\to W$ be
defined for all $(s,t)\in\Delta_T$ by

\begin{equation}\label{_eq_ch2_1rp_intsum}
\mfnc{Y}{1}{s,t}:=\sum_{
s_{m}<\gamma_{\max}
} 
\alpha^{s_{m}}\left(\mfn{Z}{s}\right)
\sum_{
\begin{smallmatrix}
R\in\mathcal{A}^k\\
\degg_{\Pi}(R-)=s_{m}
\end{smallmatrix}
}
\mfnc{Z}{R}{s,t}=\int_s^t\alpha(Z_u)dZ_u- \int_s^tR^0(Z_s,Z_u)dZ_u. 
\end{equation}
Then 
\begin{align}
\mfnc{Y}{n}{s,t}&:=\int_{s<u_1<\cdots<u_n<t}d \mfnc{Y}{1}{s,u_1}\otimes\cdots\otimes d\mfnc{Y}{1}{s,u_n}
\nonumber\\
&
=\int_{s<u_1<\cdots<u_n<t}
\sum_{
s_{m_1}<\gamma_{\max}
} 
\alpha^{s_{m_1}}\left(\mfn{Z}{s}\right)
\sum_{
\begin{smallmatrix}
R\in\mathcal{A}^k\\
\degg_{\Pi}(R-)=s_{m_1}
\end{smallmatrix}
}
d\mfnc{Z}{R}{s,u_1}\otimes\nonumber\\
&
\ \ \ \ \ \  \ \ \ \ \ \ \ \ \ \ \ \ \ \ \ \ \ \ \ \ \ \ \ \ \ \ \ \ \ \ \ \ \ \ \ \ \ \ 
\cdots\otimes\sum_{
s_{m_n}<\gamma_{\max}
} 
\alpha^{s_{m_n}}\left(\mfn{Z}{s}\right)
\sum_{
\begin{smallmatrix}
R\in\mathcal{A}^k\\
\degg_{\Pi}(R-)=s_{m_n}
\end{smallmatrix}
}
d\mfnc{Z}{R}{s,u_n}\nonumber\\
&=\sum_{s_{m_1},\cdots,s_{m_n}<\gamma_{\max}}\alpha^{s_{m_1}}\left(\mfn{Z}{s}\right)
\otimes\cdots\otimes\alpha^{s_{m_n}}\left(\mfn{Z}{s}\right)
\int_{s<u_1<\cdots<u_n<t}d\mfnc{Z}{R}{s,u_1}\otimes\cdots\otimes d\mfnc{Z}{R}{s,u_n}
\nonumber\\
&=\sum_{
s_{m_1},\cdots,s_{m_n}<\gamma_{\max}
} 
\alpha^{s_{m_1}}\left(\mfn{Z}{s}\right)\otimes\cdots\otimes
\alpha^{s_{m_n}}\left(\mfn{Z}{s}\right)
\sum_{
\begin{smallmatrix}
R_1,\dots,R_n\in\mathcal{A}^k\\
\degg_{\Pi}(R_i-)=s_i, \ i=1,\dots,n\\
\sigma\in OS(\Vert R_1\Vert,\dots,\Vert R_n\Vert)
\end{smallmatrix}
}
\sigma^{-1}\mfnc{Z}{R_1\ast\cdots\ast R_n}{s,t} \label{_eq_PiIntLem2}
\end{align}
for $n=2,\dots,\lfloor p_{\max}\rfloor$.

Equation \eqref{_eq_PiIntLem2} is an adaptation of the results of Section 4.2. of \cite{rp_notes}. 

\begin{lem}\label{_lem_PiIntLem1}
If $\mfn{Z}{}:[0,T]\to V$ is path of finite variation and piece-wise differentiable, then for any $s<u<t$ in $[0,T]$,
\begin{eqnarray}
&&
\sum_{s_m<\gamma_{\max}}\alpha^{s_m}
\left(
	\mfn{Z}{s}
\right)
\left(
	\sum_{\degg_{\Pi}(R)=s_m} \mfnc{Z}{R}{s,t}
\right)
\left(
	\mfn{\dot{Z}}{t}
\right)=\nonumber\\
&& \ \ \ \ \ \  \ \ \ \ \ \ \ \ \ 
\sum_{s_m<\gamma_{\max}}
\Big(
\alpha^{s_m}
\left(
	\mfn{Z}{u}
\right)
-R_{s_m}
\left(
	\mfn{Z}{s},
	\mfn{Z}{u}
\right)
\Big)
\left(
	\sum_{\degg_{\Pi}(R)=s_m}\mfnc{Z}{R}{u,t}
\right)
\left(
	\mfn{\dot{Z}}{t}
\right)\nonumber\\
&& \label{_eq_PiIntLem1}
\end{eqnarray}
\end{lem}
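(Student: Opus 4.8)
The plan is to prove \eqref{_eq_PiIntLem1} as an identity between continuous functions of $t$ by reducing it, via the definition of the $R_{s_m}$ and $\alpha^{s_m}$ from Definition \ref{_dfn_lipGP}, to the algebraic relation satisfied by the iterated integrals $\mfnc{Z}{R}{s,t}$. First I would expand the left-hand side using $\alpha^{s_m}(\mfn{Z}{s})(v)(u)=\sum_{i,\,s_m<\gamma_i}\alpha_i^{s_m}(\mfn{Z}{s})(v)(\pi_{V^i}u)$, and on the right-hand side substitute the Lipschitz expansion of $\alpha_i^{s_m}(\mfn{Z}{u})$ in powers of $(\mfn{Z}{s}-\mfn{Z}{u})$ together with the definition \eqref{_dfn_LipPiGammaR} of $R^{s_m}$; the remainder terms $R^{s_m}_i$ should cancel exactly against the subtracted $R_{s_m}$ on the right, leaving only the $\alpha_i^{s_n}(\mfn{Z}{s})$ main terms to be matched. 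After that cancellation, the identity to be verified has the schematic form, for each fixed $i$ with $s_m<\gamma_i$,
\begin{align*}
\sum_{s_m<\gamma_i}\alpha_i^{s_m}(\mfn{Z}{s})\!\left(\sum_{\degg_\Pi(R)=s_m}\mfnc{Z}{R}{s,t}\otimes\pi_{V^i}\mfn{\dot Z}{t}\right)
=\sum_{s_m\le s_n<\gamma_i}\alpha_i^{s_n}(\mfn{Z}{s})\!\left(\cdots\right),
\end{align*}
and the whole content is a bookkeeping identity among iterated integrals.

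The key iterated-integral fact I would isolate and prove separately (perhaps as an intermediate display, not a numbered lemma) is Chen's identity in the present graded form: for a multi-index $Q$ with $\degg_\Pi(Q)=s_n$,
\[
\mfnc{Z}{Q}{s,t}=\sum_{Q=Q'\ast Q''}\mfnc{Z}{Q'}{s,u}\otimes\mfnc{Z}{Q''}{u,t},
\]
which follows immediately from \eqref{_eq_iterated_int} by splitting the simplex $\{s<u_1<\cdots<u_l<t\}$ at the point $u$. Summing this over all $Q$ with $\degg_\Pi(Q)=s_n$ and regrouping the right-hand side by $s_m:=\degg_\Pi(Q')$ gives
\[
\sum_{\degg_\Pi(Q)=s_n}\mfnc{Z}{Q}{s,t}=\sum_{s_m\le s_n}\ \sum_{\degg_\Pi(R)=s_m}\mfnc{Z}{R}{s,u}\otimes\!\!\sum_{\degg_\Pi(R')=s_n-s_m}\!\!\mfnc{Z}{R'}{u,t}.
\]
This is exactly the combinatorial core; substituting it on the right-hand side of the schematic identity above, and recognising the factor $\sum_{\degg_\Pi(R')=s_n-s_m}\mfnc{Z}{R'}{u,t}$ built up from the $\frac{(x-y)_R}{\Vert R\Vert!}$-type polynomial that appears inside the Lipschitz expansion of $\alpha_i^{s_m}$ (here with $x=\mfn{Z}{s}$, $y=\mfn{Z}{u}$, after integrating $\mfn{\dot Z}{}$ over the relevant sub-simplex to turn the $(x-y)_R/\Vert R\Vert!$ into $\mfnc{Z}{R}{u,\cdot}$), both sides collapse to the same double sum indexed by $s_m\le s_n<\gamma_i$.

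The main obstacle I anticipate is not any single estimate — there are no estimates here, the statement is an exact identity for finite-variation paths — but the index bookkeeping at the boundary $s_n<\gamma_i$: the Lipschitz expansion of $\alpha_i^{s_m}$ in Definition \ref{_dfn_lipGP} only runs over $s_m\le s_n<\gamma_i$, so one must check that no term with $\degg_\Pi$-degree $\ge\gamma_i$ is ever needed, i.e. that the truncation on the two sides is consistent. This is where the hypothesis appears implicitly through the definition of $\alpha^{s_m}$ and $R_{s_m}$ as sums over $\{i:s_m<\gamma_i\}$. I would handle it by doing the whole computation one component $i$ at a time (summing over $i$ only at the very end), so that for each $i$ the cutoff is the single number $\gamma_i$ and the reindexing $s_n=s_m+(s_n-s_m)$ stays within the allowed range. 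The differentiability and piecewise-$C^1$ hypothesis is used only to justify differentiating $\mfnc{Z}{R}{s,\cdot}$ and the integration-by-parts/simplex-splitting manipulations pointwise in $t$.
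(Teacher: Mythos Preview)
Your overall strategy --- expand both sides componentwise in $i$, substitute the Lipschitz expansion of $\alpha_i^{s_m}(\mfn{Z}{u})$ from Definition~\ref{_dfn_lipGP} so that the remainders $R_i^{s_m}$ cancel against the subtracted $R_{s_m}$, swap the order of summation, and then match the resulting double sums via Chen's identity for the iterated integrals --- is precisely the approach the paper points to (it gives no details and cites Lemma~5.5.2 of \cite{rp_book}). So at the level of method you are aligned with the paper.

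There is, however, one step in your write-up that is genuinely wrong as stated and would leave a gap. You propose to ``turn the $(x-y)_R/\Vert R\Vert!$ into $\mfnc{Z}{R}{u,\cdot}$'' by ``integrating $\mfn{\dot Z}{}$ over the relevant sub-simplex''. This is not the mechanism. With $x=\mfn{Z}{s}$ and $y=\mfn{Z}{u}$, the quantity $(\mfn{Z}{s}-\mfn{Z}{u})_R/\Vert R\Vert!$ is a constant in $t$, lives over the interval $[s,u]$ (not $[u,t]$), and is in general \emph{not} equal to the iterated integral $\mfnc{Z}{R}{s,u}$: for a bounded-variation path it equals only the symmetric part of $\sum_{\Vert Q\Vert=\Vert R\Vert}\mfnc{Z}{Q}{s,u}$ projected onto $V^{\otimes R}$, up to sign. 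The ingredient you are missing is the hypothesis in Definition~\ref{_dfn_lipGP} that each $\alpha_i^{s_n}$ takes values in the space of $s_n$-symmetric maps. After Chen's identity and the reindexing you describe, the two tensors being fed into $\alpha_i^{s_n}(\mfn{Z}{s})$ on the left and right differ only by non-symmetric pieces at $\Pi$-degree $s_n$; the $s_n$-symmetry of $\alpha_i^{s_n}$ is exactly what annihilates that discrepancy and closes the identity. You should invoke this explicitly. Your treatment of the truncation at $\gamma_i$, done one component at a time, is correct.
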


The proof of the lemma is analogous to the proof of Lemma 5.5.2 in \cite{rp_book}.

\begin{lem}\label{_lem_PiIntLem3}
Let $Z:[0,T]\to V$ be a path of finite variation. Let the map
\[
Y=(1,Y^1,\dots,Y^{\lfloor p_{\max}\rfloor}):\Delta_T\to \Rr\oplus W\oplus W^{\otimes 2} 
\otimes\cdots\otimes W^{\otimes \lfloor p_{\max}\rfloor} 
\]
be defined by equations \eqref{_eq_ch2_1rp_intsum} and \eqref{_eq_PiIntLem2}.

 Then for all $s<u<t$ in $[0,T]$,
\begin{equation}
\mfn{Y}{s,u}\otimes\mfn{Y}{u,t}-\mfn{Y}{s,t}=\mfn{Y}{s,u}\otimes\mfn{N}{s,u,t}\label{_eq_PiIntLem3}
\end{equation}
where 
\begin{eqnarray}
&& \mfnc{N}{i}{s,u,t}=\pi_{W^{\otimes i}}\mfn{N}{s,u,t}:=\nonumber\\
&&
\sum_{
\begin{smallmatrix}
s_{m_1},\dots,s_{m_i}<\gamma_{\max}\\
\eps_1,\dots,\eps_i\in\{0,1\}\\
\eps_1\cdots\eps_i=0
\end{smallmatrix}
}
\beta^{\eps_1}_{s_{m_1}}\left(
	\mfn{Z}{s},\mfn{Z}{u}
\right)
\cdots
\beta^{\eps_i}_{s_{m_i}}\left(
	\mfn{Z}{s},\mfn{Z}{u}
\right)
\sum_{
\begin{smallmatrix}
R_1,\dots,R_i\in\mathcal{A}^k\\
\degg_{\Pi}(R_j-)=s_j, \ j=1,\dots,i\\
\sigma\in OS(\Vert R_1\Vert,\dots,\Vert R_i\Vert)
\end{smallmatrix}
}
\sigma^{-1}\mfnc{Z}{R_1\ast\cdots\ast R_i}{s,t}\nonumber\\
&& \label{_eq_PiIntLem3_2}
\end{eqnarray}
with
\begin{equation*}
\beta^{\eps}_{s_{m}}\left(
	\mfn{Z}{s},\mfn{Z}{u}
\right)=\left\{
\begin{array}{rl}
R_{s_{m}}\left(
	\mfn{Z}{s},\mfn{Z}{u}
\right) & \text{if } \eps=0,\\
-\alpha^{s_{m}}\left(
	\mfn{Z}{s}\right) & \text{if } \eps=1.
\end{array}\right.
\end{equation*}
\end{lem}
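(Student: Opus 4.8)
The plan is to prove the identity \eqref{_eq_PiIntLem3} directly by computing the left-hand side from the explicit formulas \eqref{_eq_ch2_1rp_intsum} and \eqref{_eq_PiIntLem2}, using that $Z$ has finite variation so that all the iterated integrals $\mfnc{Z}{R}{s,t}$ genuinely exist and obey Chen's identity. First I would record the multiplicativity of the bare iterated integrals: for a multi-index and any $s<u<t$, Chen's relation gives $\mfnc{Z}{R}{s,t}=\sum \mfnc{Z}{R'}{s,u}\otimes\mfnc{Z}{R''}{u,t}$ over splittings $R=R'\ast R''$, and after summing over the $R_i$ and over the shuffle sets $OS(\cdot)$ this says exactly that the ``signature-like'' object $\sum_{R:\degg_\Pi(R-)=s_m}\sigma^{-1}\mfnc{Z}{R_1\ast\cdots}{\cdot,\cdot}$ behaves like a tensor-exponential increment. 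So the combinatorial content is that the map $(s,t)\mapsto \sum_{s_m<\gamma_{\max}}\alpha^{s_m}(Z_s)\big(\sum_{\degg_\Pi(R)=s_m}\mfnc{Z}{R}{s,t}\big)$ — with the coefficient frozen at the \emph{left} endpoint $s$ — is \emph{almost} multiplicative, the defect being governed by the Lipschitz expansion of $\alpha^{s_m}$ between $Z_s$ and $Z_u$.

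The key step is then to substitute the defining expansion of $\alpha_i^{s_m}$ from Definition \ref{_dfn_lipGP} (equivalently the reconstructed $\alpha^{s_m}$ and remainder $R^{s_m}$ from \eqref{_dfn_LipPiGammaR}) into the middle factor $\mfn{Y}{u,t}$. Concretely, in $\mfn{Y}{u,t}$ the coefficients are $\alpha^{s_{m_j}}(Z_u)$; writing each $\alpha^{s_{m_j}}(Z_u)=\sum_{s_{m_j}\le s_{n_j}<\gamma_{\max}}\alpha^{s_{n_j}}(Z_s)\big(\,\cdot\otimes\sum_{\degg_\Pi(R)=s_{n_j}-s_{m_j}}(Z_s-Z_u)_R/\Vert R\Vert!\big)+R^{s_{m_j}}(Z_s,Z_u)(\cdot)$ and feeding the $(Z_s-Z_u)_R/\Vert R\Vert!$ pieces into the neighbouring iterated-integral factor via Lemma \ref{_lem_PiIntLem1} (which is precisely the statement that these correction tensors absorb into $\mfnc{Z}{R}{u,t}$ to rebuild $\mfnc{Z}{R}{s,t}$), one reassembles, on each tensor slot, either a genuine $\mfnc{Z}{R_j\ast\cdots}{s,t}$ factor with coefficient $\alpha^{s_{n_j}}(Z_s)$ ($\eps_j=1$) or a leftover remainder factor $R^{s_{m_j}}(Z_s,Z_u)$ ($\eps_j=0$). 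The terms in which \emph{every} slot gets $\eps_j=1$ reconstruct exactly $\mfn{Y}{s,t}$ (here one uses Chen and the shuffle combinatorics of $OS$ to see the slot-wise $s,t$ integrals glue into the global one, cancelling $\mfn{Y}{s,t}$ on the right-hand side); everything else has at least one slot with $\eps_j=0$, i.e. $\eps_1\cdots\eps_i=0$, and after factoring $\mfn{Y}{s,u}\otimes$ from the left these are collected into the stated $\mfnc{N}{i}{s,u,t}$ with the sign conventions $\beta^{0}=R_{s_m}$, $\beta^{1}=-\alpha^{s_m}(Z_s)$.

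I expect the main obstacle to be the bookkeeping of the shuffle/ordered-shuffle indices $OS(\Vert R_1\Vert,\dots)$ together with the frozen-endpoint manipulation: one must check that when Lemma \ref{_lem_PiIntLem1} pulls the $(Z_s-Z_u)_R$ corrections across, the induced reindexing of $R_j\mapsto R_j$ (enlarged) together with $\sigma$ is compatible — i.e. that the factor $\sigma^{-1}$ acting on $\mfnc{Z}{R_1\ast\cdots\ast R_i}{s,t}$ emerges with the correct multiplicity and no spurious combinatorial factors, and that the $1/\Vert R\Vert!$ normalisations in Definition \ref{_dfn_lipGP} match the number of ways a block can be interleaved. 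I would handle this by treating one tensor slot at a time, using the telescoping $\mfn{Y}{s,u}\otimes\mfn{Y}{u,t}=\sum(\text{first }j\text{ slots from }Y_{s,u})\otimes(\text{remaining from }Y_{u,t})$ and inducting on the slot index, so that at each stage Lemma \ref{_lem_PiIntLem1} is applied to a single pair $(\,\cdot\,)_{s,u}\otimes(\,\cdot\,)_{u,t}$; this reduces the global combinatorics to the already-established one-slot identity. The remaining steps — verifying that $\mfn{N}{s,u,t}$ as written is exactly the residue and that the $\eps_1\cdots\eps_i=0$ constraint is the precise complement of the fully-reconstructed term — are then routine.
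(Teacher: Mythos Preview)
Your proposal is correct and follows essentially the same approach as the paper: the paper's own proof consists only of the remark that it ``is based on Lemma \ref{_lem_PiIntLem1} and the equation \eqref{_eq_PiIntLem2}, and is analogous to the proof of Lemma 5.5.3 in \cite{rp_book}'', and what you outline --- use Chen's relation for the iterated integrals $\mfnc{Z}{R}{s,t}$, expand each coefficient $\alpha^{s_m}(Z_u)$ via the \lipGP expansion and Lemma \ref{_lem_PiIntLem1}, then separate the fully reconstructed terms (all $\eps_j=1$, giving $\mfn{Y}{s,t}$) from those carrying at least one remainder $R_{s_m}$ --- is exactly that argument spelled out. Your slot-by-slot induction to control the $OS$ combinatorics is the standard way to organise this and is what the Lyons--Qian proof does in the homogeneous case.
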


The proof is based on Lemma \ref{_lem_PiIntLem1} and the equation \eqref{_eq_PiIntLem2}, and is analogous to the proof of Lemma 5.5.3 in \cite{rp_book}.

\begin{rem}\label{_lem_PiIntRem2}
The equation \eqref{_eq_PiIntLem2} and Lemmas \ref{_lem_PiIntLem1} and \ref{_lem_PiIntLem3} are stated for a smooth rough path $\mfn{Z}{}$. However for each of the equalities \eqref{_eq_PiIntLem1}, \eqref{_eq_PiIntLem2} and \eqref{_eq_PiIntLem3}, both the right-hand side and the left-hand side are continuous in the $\Pi$-variation metric. This fact extends the lemmas for geometric $\Pi$-rough paths and this is the key to the next proof. 
\end{rem}

We now prove Theorem \ref{_thm_lipGP_Integration}.

\begin{proof} [Proof of Theorem \ref{_thm_lipGP_Integration}:]
First we prove that $\mfnh{Y}{}:\Delta_T\to T^{((p_{\max}),1)}(W)$, defined by
\begin{eqnarray*}
&&\mfnch{Y}{n}{s,t}= \\
&& 
\sum_{
s_{m_1},\cdots,s_{m_n}<\gamma_{\max}
} 
\alpha^{s_{m_1}}\left(\pi_V\left(\mfn{Z}{0,s}\right)\right)\otimes\cdots\otimes
\alpha^{s_{m_n}}\left(\pi_V\left(\mfn{Z}{0,s}\right)\right)
\sum_{
\begin{smallmatrix}
R_1,\dots,R_n\in\mathcal{A}^k\\
\degg_{\Pi}(R_i-)=s_i, \ i=1,\dots, n \\
\sigma\in OS(\Vert R_1\Vert,\dots,\Vert R_n\Vert)
\end{smallmatrix}}
\sigma^{-1}\pi_{R_1\ast\cdots\ast R_n}\left(\mfn{Z}{s,t}\right)
\end{eqnarray*}
is an almost $p_{\max}$-rough path. 
Each term in the above sum is of the form 
\begin{equation}
\alpha^{s_{m_1}}\left(\pi_V\left(\mfn{Z}{0,s}\right)\right)\otimes\cdots\otimes
\alpha^{s_{m_n}}\left(\pi_V\left(\mfn{Z}{0,s}\right)\right)
\sigma^{-1}\pi_{R_1\ast\cdots\ast R_n}\left(\mfn{Z}{s,t}\right)\label{_eq_ch2_BoundOnTerm}
\end{equation}
where $\degg_{\Pi}(R_i-)=s_{m_i}$. Since such a term is bounded by $C_0\Vert\alpha\Vert^{n}_{\text{Lip}^{\Gamma,\Pi}}\omega(s,t)^{n/p_{\max}}$ where $C_0$ only depends on $\Gamma$, $\Pi$ and $\omega(0,T)$, this implies that condition $i)$ of Definition \ref{_dfn_AlmostpRP} is satisfied.

We prove condition $ii)$ by giving a bound on the norm of 
\begin{equation*}
(\mfnh{Y}{s,u}\otimes\mfnh{Y}{u,t})^n-\mfnch{Y}{n}{s,t}=\sum_{i=0}^n\mfnch{Y}{i}{s,u}\otimes\mfnc{N}{n-i}{s,u,t}.
\end{equation*}

The representation of $\mfnc{N}{n-i}{s,u,t}$ in the equation \eqref{_eq_PiIntLem3_2} implies that 
there is at least one factor of the form 
$R^{s_m}\left(\pi_V\left(\mfn{Z}{0,s}\right),\pi_V\left(\mfn{Z}{0,u}\right)\right)$.
 Considering the representation \eqref{_dfn_LipPiGammaR} of $R^{s_m}$ and the error bound \eqref{_eq_remainder_bound} on $R_i^{s_m}$, the following bound is implied:
\begin{equation*}
\left\Vert R^{s_m}\left(\pi_V\left(\mfn{Z}{0,s}\right),\pi_V\left(\mfn{Z}{0,u}\right)\right)\right\Vert
\le
M\sum_{i,s_m<\gamma_i}\sum_{j=1}^k\left\Vert\pi_{V_j}\left(\mfn{Z}{s,u}\right)\right\Vert^{(\gamma_i-s_m)p_j}
\le
M\sum_{i,s_m<\gamma_i}\sum_{j=1}^k\omega(s,t)^{\gamma_i-s_m}.
\end{equation*}
Moreover considering that $R^{s_m}_i(x,y)(u)$ only acts on elements of $V^i$,
there exists a constant $C_1$ depending only on $\Vert\alpha\Vert_{\text{Lip}^{\Gamma,\Pi}}$, $\Gamma$, $\Pi$ and $\omega(0,T)$ such that 
\begin{equation*}
\Vert(\mfnh{Y}{s,u}\otimes\mfnh{Y}{u,t})^n-\mfnch{Y}{n}{s,t}\Vert\le C_1
\sum_{i=1}^k\omega(s,t)^{\gamma_i+(1/p_i)}.
\end{equation*}
By the choice of $\Gamma$, $\theta:=\min_{1\le i\le k}(\gamma_i+(1/p_i))\ge 1$, which implies that 
there exists a constant $C$ depending only on $\Vert\alpha\Vert_{\text{Lip}^{\Gamma,\Pi}}$, $\Gamma$, $\Pi$ and $\omega(0,T)$
such that 
\begin{equation*}
\Vert(\mfnh{Y}{s,u}\otimes\mfnh{Y}{u,t})^n-\mfnch{Y}{n}{s,t}\Vert\le C
\omega(s,t)^{\theta}
\end{equation*}
and hence $\mfn{Y}{}$ is a $\theta$-almost $p_{\max}$-rough path. 

Arguments analogous to Proposition 4.10 in \cite{rp_notes} prove that $\mfn{Y}{}$ is also a $\theta$-almost $p_{\max}$-rough path and furthermore that the $p_{\max}$-rough associated to $\mfn{Y}{}$ by Theorem \ref {_thm_RPtoAlmostRP} coincides with the $p_{\max}$-rough path associated to $\mfnh{Y}{}$. 
\end{proof}

\begin{thm}\label{_thm_BoundOnInt}
Under the conditions of Definition \ref{_dfn_lipGP_Integration}, there exists a constant $K$ depending only on 
 $\Gamma$, $\Pi$ and $\omega(0,T)$, such that
\begin{equation*}
\left\Vert\pi_{W^{\otimes i}}\left(\int_{s}^{t}\alpha(\mfn{Z}{})\dd\mfn{Z}{}\right)\right\Vert\le K
\Vert\alpha\Vert^{i}_{\text{Lip}(\Pi,\Gamma)}
\omega(s,t)^{\frac{i}{p_{\max}}}.
\end{equation*}
\end{thm}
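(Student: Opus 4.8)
The plan is to bound the integral rough path term by term by tracking the constant $K$ through the two-stage construction that produced it. Recall that $\int_s^t \alpha(\mfn{Z}{})\dd\mfn{Z}{}$ is by Definition \ref{_dfn_lipGP_Integration} the unique $(p_{\max})$-rough path associated by Theorem \ref{_thm_RPtoAlmostRP} to the almost $p_{\max}$-rough path $\mfn{Y}{}$ of Theorem \ref{_thm_lipGP_Integration}. Theorem \ref{_thm_RPtoAlmostRP} already tells us that this associated rough path is close to $\mfn{Y}{}$ and that its $p_{\max}$-variation is controlled by $\wh{K}\omega$ for a constant $\wh{K}$ depending only on $p_{\max}$, $\theta$ and $\omega(0,T)$; in particular
\[
\left\Vert \pi_{W^{\otimes i}}\left(\int_s^t \alpha(\mfn{Z}{})\dd\mfn{Z}{}\right)\right\Vert
\le \frac{(\wh{K}\,\omega(s,t))^{i/p_{\max}}}{\beta\,(i/p_{\max})!}.
\]
So the only thing left to do is to make the $\Vert\alpha\Vert_{\text{Lip}(\Pi,\Gamma)}^{i}$ dependence explicit. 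For that I would go back into the proof of Theorem \ref{_thm_RPtoAlmostRP} (equivalently the dyadic-refinement argument of \cite{rp_paper,rp_book}) and observe that the constant there is \emph{multiplicative} in the size of the almost rough path: replacing $\mfn{Y}{}$ by a scalar rescaling $\il^{i}$ in degree $i$ rescales the associated rough path the same way, so the bound on $\pi_{W^{\otimes i}}$ picks up exactly the degree-$i$ homogeneity factor coming from $\mfn{Y}{}$.

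The key quantitative input is the term-by-term bound established inside the proof of Theorem \ref{_thm_lipGP_Integration}: each summand \eqref{_eq_ch2_BoundOnTerm} of $\mfnc{Y}{n}{s,t}$ is bounded by $C_0\,\Vert\alpha\Vert_{\text{Lip}^{\Gamma,\Pi}}^{n}\,\omega(s,t)^{n/p_{\max}}$ with $C_0$ depending only on $\Gamma$, $\Pi$, $\omega(0,T)$, so $\Vert\mfnc{Y}{n}{s,t}\Vert \le C_0'\,\Vert\alpha\Vert_{\text{Lip}^{\Gamma,\Pi}}^{n}\,\omega(s,t)^{n/p_{\max}}$ after absorbing the (finite, $\Pi$-dependent) number of summands. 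Similarly the almost-multiplicativity defect was shown to satisfy $\Vert(\mfnh Y_{s,u}\otimes\mfnh Y_{u,t})^{n}-\mfnch{Y}{n}{s,t}\Vert\le C\,\omega(s,t)^{\theta}$ with $C$ a polynomial in $\Vert\alpha\Vert_{\text{Lip}^{\Gamma,\Pi}}$ of degree $\le n$, all of whose monomials carry a compensating power of $\omega(0,T)$; by homogenising $\omega$ we may take the bound to be $C''\,\Vert\alpha\Vert_{\text{Lip}^{\Gamma,\Pi}}^{n}\,\omega(s,t)^{\theta}$. Feeding these two bounds into the conclusion of Theorem \ref{_thm_RPtoAlmostRP} — applied to the control $\max(1,\Vert\alpha\Vert_{\text{Lip}^{\Gamma,\Pi}})^{p_{\max}}\,\omega$ rather than $\omega$ — yields
\[
\left\Vert\pi_{W^{\otimes i}}\left(\mfn X_{s,t}-\mfn Y_{s,t}\right)\right\Vert\le K'\,\Vert\alpha\Vert_{\text{Lip}^{\Gamma,\Pi}}^{i}\,\omega(s,t)^{\theta},
\]
and combining with the triangle inequality and the term-by-term bound on $\mfnc{Y}{i}{s,t}$ gives the claim with $K$ depending only on $\Gamma,\Pi,\omega(0,T)$.

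I expect the main obstacle to be bookkeeping rather than conceptual: one must verify that the constant in Theorem \ref{_thm_RPtoAlmostRP} genuinely scales like the $i$-th power of the size of the almost rough path and does not, say, hide an additive $\omega(0,T)$ that would spoil homogeneity. The clean way to handle this is the rescaling trick: given $\alpha$, set $\il=\max(1,\Vert\alpha\Vert_{\text{Lip}^{\Gamma,\Pi}})$, note that the dilation $\delta_{\il^{-1}}$ on $T^{((p_{\max}),1)}(W)$ (multiplying the degree-$i$ component by $\il^{-i}$) sends $\mfn Y{}$ to an almost $p_{\max}$-rough path whose size and multiplicativity defect are controlled by the \emph{same} constants $C_0',C''$ but with $\Vert\alpha\Vert_{\text{Lip}^{\Gamma,\Pi}}$ replaced by $\Vert\alpha\Vert_{\text{Lip}^{\Gamma,\Pi}}/\il\le 1$; apply Theorem \ref{_thm_RPtoAlmostRP} to this normalised object to get a constant $K$ free of $\Vert\alpha\Vert_{\text{Lip}^{\Gamma,\Pi}}$, and then dilate back by $\delta_{\il}$, which reintroduces exactly the factor $\il^{i}\le (1+\Vert\alpha\Vert_{\text{Lip}^{\Gamma,\Pi}})^{i}$ in degree $i$. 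Absorbing constants finishes the proof.
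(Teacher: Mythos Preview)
Your approach is correct and is precisely the argument the paper has in mind: the paper gives no detailed proof but simply defers to the analogous Theorem~4.12 of \cite{rp_notes}, whose proof combines the explicit degree-$n$ bound $C_0\Vert\alpha\Vert^{n}\omega(s,t)^{n/p_{\max}}$ on the almost rough path (exactly your input from \eqref{_eq_ch2_BoundOnTerm}) with the quantitative conclusion of Theorem~\ref{_thm_RPtoAlmostRP}, extracting the homogeneity in $\Vert\alpha\Vert$ via the dilation $\delta_{\il}$ on $T^{((p_{\max}),1)}(W)$.

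One small correction: your choice $\il=\max(1,\Vert\alpha\Vert_{\text{Lip}^{\Gamma,\Pi}})$ yields only $K\max(1,\Vert\alpha\Vert)^{i}$, which is weaker than the stated $K\Vert\alpha\Vert^{i}$ when $\Vert\alpha\Vert<1$. Since both $\Vert\mfnc{Y}{n}{s,t}\Vert$ and the level-$n$ multiplicativity defect $\sum_{j}\mfnc{Y}{j}{s,u}\otimes\mfnc{N}{n-j}{s,u,t}$ carry \emph{exactly} $n$ factors bounded by $\Vert\alpha\Vert_{\text{Lip}^{\Gamma,\Pi}}$ (each $\alpha^{s_m}$ and each $R^{s_m}$ contributes one), you may take $\il=\Vert\alpha\Vert_{\text{Lip}^{\Gamma,\Pi}}$ directly (the case $\alpha=0$ being trivial); then $\delta_{\il^{-1}}\mfn{Y}{}$ is an almost $p_{\max}$-rough path with constants depending only on $\Gamma,\Pi,\omega(0,T)$, and dilating back recovers the sharp factor $\Vert\alpha\Vert^{i}$.
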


The proof is analogous to the proof of Theorem 4.12. of \cite{rp_notes}

\section{Differential equations driven by $\Pi$-rough paths}\label{sec:RDE}
When stating and proving the slightly generalised version of Lyons' Universal Limit Theorem, we will refer to (linear) images of $\Pi$-rough paths in the following sense.
\begin{dfn}[Image by a function] Let $Z:\Delta_T\to\TPi{1}(V)$ be a geometric $\Pi$-rough path as in section \ref{sec:PiRPs}. Let $f:V\to W$ be a \lipGP function for some $k$-tuple $\Gamma=(\gamma_1,\dots,\gamma_k)$ satisfying 
 $\gamma_i>1-1/p_i$ for $i=1,\dots,k$. Then the integral  $\int df(Z)dZ$ is by definition a rough path in $\Omega_{(p_{\max})}(W)$. We will denote this rough path by $\hat{f}(Z)$. 
 \end{dfn}
We make use of linear images of rough paths and in particular projections of rough paths. E.g. if $X$ is a rough path in $\Omega_{\Pi}(V)$ then the image of $X$ under the projection $\pi_{V^i}$ will be denoted by $\hat{\pi}_{V^i}(X)$. 

Now we can formally introduce differential equations driven by geometric $\Pi$-rough paths. 

\begin{dfn}[Differential equations driven by $\Pi$-rough paths]\label{_dfn_RDE}
 Let $k\ge 1$ be an integer, $V$ and $W$ Banach spaces, such that $V=V^1\oplus\cdots\oplus V^k$ for some Banach spaces $V^1,\dots, V^k$. 
Let $\Pi=(p_1,\dots,p_k)$ denote a $k$-tuple and  $\Pi^*=(p_1,\dots,p_k,p_{\max})$ denote a $(k+1)$-tuple both as in Definition \ref{_dfn_PiBasics}. 
Let $f:V\oplus W\to L(V,W)$ be a 
function. Finally let  $\mfn{X}{}\in G\Omega_{\Pi}(V)$ be a geometric $\Pi$-rough path and $\xi$ an element in $W$. 

We will say that $\mfn{Z}{}\in G\Omega_{\Pi^*}(V\oplus W)$  is a solution of the differential equation 
\begin{equation}
\dd\mfn{Y}{t}=f(\mfn{X}{t},\mfn{Y}{t})\dd \mfn{X}{t}, \ \mfn{Y}{0}=\xi\label{_eq_dfn_RDE}
\end{equation}
%
if $\hat{\pi}_{V}(\mfn{Z}{})=\mfn{X}{}$ and
\begin{equation}\label{_eq_dfn_rde}
\mfn{Z}{}=\int h_0(\mfn{Z}{})\dd\mfn{Z}{}
\end{equation}
where 
$h_0:V\oplus W\to\text{\protect{\textup{End}}}(V\oplus W)$ is defined by
\begin{equation*}
h_0(x,y)=\left(\begin{array}{cc}
\text{Id}_V & 0 \\
f(x,y+\xi) & 0
\end{array}\right)
\end{equation*}
provided the integral \eqref{_eq_dfn_rde} is well defined.
\end{dfn}

In the remainder of the section we give a sufficient condition for the existence and uniqueness of solution to the equation \eqref{_eq_dfn_RDE}. We will assume the existence of the function $g_{\xi}:V\times W\times W\to L(W,L(V,W))$ such that
\[
f(x,y_1+\xi)-f(x,y_2+\xi)=g_{\xi}(x,y_1,y_2)(y_1-y_2), \text{ for all } x\in V, \ y_1,y_2\in W. 
\]
We introduce the one-forms $h_1:V\oplus W \oplus W\to \text{\protect{\textup{End}}}(V\oplus W\oplus W)$ and $h_2:V\oplus W \oplus W \oplus W\to \text{\protect{\textup{End}}}(V\oplus W\oplus W \oplus W)$ as follows:
\begin{align*}
h_1(x,y_1,y_2)=\left(
\begin{array}{ccc}
Id_V & 0 & 0 \\
0 & 0 & Id_W \\
f(x,y_2+\xi) & 0 & 0
\end{array}
\right)
\end{align*}
\begin{align*}
h_2(x,y_1,y_2,d)=\left(
\begin{array}{cccc}
Id_V & 0 & 0 & 0\\
0 & 0 & Id_W & 0\\
f(x,y_2+\xi) & 0 & 0 & 0 \\
\rho g_{\xi}(x,y_1,y_2)(d) & 0 & 0 & 0
\end{array}
\right)
\end{align*}
where $\rho$ is an arbitrary real number greater than $1$ fixed for the remainder of the section.

\begin{thm}[Universal Limit Theorem, inhomogeneous case]\label{_thm_ULT}
Let $k\ge 1$ be an integer, $V$ and $W$ Banach spaces, such that $V=V^1\oplus\cdots\oplus V^k$ for some Banach spaces $V^1,\dots, V^k$. 
Let $\Pi=(p_1,\dots,p_k)$ denote a $k$-tuple and  $\mfn{X}{}\in G\Omega_{\Pi}(V)$ be a geometric $\Pi$-rough path and $\xi$ an element in $W$. 

Suppose that there exist real numbers $\gamma_1$,\dots, $\gamma_{k+3}$ such that $\gamma_i>1-1/p_i$ for $i=1,\dots k$ and $\gamma_{k+j}>1-1/p_{\max}$ for $j=1,2,3$, furthermore the functions $h_0$, $h_1$ and $h_2$ are $\text{\protect{\textup{Lip}}}^{\Gamma_0,\Pi_0}$, $\text{\protect{\textup{Lip}}}^{\Gamma_1,\Pi_1}$ and $\text{\protect{\textup{Lip}}}^{\Gamma_2,\Pi_2}$ one-forms respectively for 
$\Gamma_0=(\gamma_1,\dots,\gamma_{k+1})$, $\Pi_0=(p_1,\dots,p_k,p_{\max})$, 
$\Gamma_1=(\gamma_1,\dots,\gamma_{k+2})$, $\Pi_1=(p_1,\dots,p_k,p_{\max},p_{\max})$,
and $\Gamma_2=(\gamma_1,\dots,\gamma_{k+3})$, $\Pi_2=(p_1,\dots,p_k,p_{\max},p_{\max},p_{\max})$.

Then the equation 
\begin{equation}
\dd\mfn{Y}{t}=f(\mfn{X}{t},\mfn{Y}{t})\dd \mfn{X}{t}, \ \mfn{Y}{0}=\xi\label{_eq_ULT_RDE}
\end{equation}
has a unique solution.
\end{thm}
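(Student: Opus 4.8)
The plan is to exhibit a solution as a fixed point of the integration map $\mathcal{F}:\mfn{Z}{}\mapsto\int h_0(\mfn{Z}{})\dd\mfn{Z}{}$ on a complete metric space of geometric $\Pi^*$-rough paths, and to run a Banach fixed-point argument in the $\Pi^*$-variation metric, following the proof of Lyons' Universal Limit Theorem in \cite{rp_notes} but with the classical integration theorem replaced by Theorem \ref{_thm_lipGP_Integration} and the estimate of Theorem \ref{_thm_BoundOnInt}. The hypotheses $\gamma_i>1-1/p_i$ ($i\le k$) and $\gamma_{k+j}>1-1/p_{\max}$ ($j=1,2,3$) are exactly what is needed for the exponent $\min(\gamma_1+1/p_1,\dots)$ attached to each of $(\Pi_0,\Gamma_0)$, $(\Pi_1,\Gamma_1)$, $(\Pi_2,\Gamma_2)$ to exceed $1$, so that by Theorem \ref{_thm_lipGP_Integration} and Theorem \ref{_thm_RPtoAlmostRP} the integrals of the \lipGP one-forms $h_0$, $h_1$, $h_2$ along geometric $\Pi$- (resp.\ joint) rough paths are all well defined.

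For a constant $L\ge 1$ let $\mathcal{B}_L$ be the set of $\mfn{Z}{}\in G\Omega_{\Pi^*}(V\oplus W)$ with $\widehat{\pi}_V(\mfn{Z}{})=\mfn{X}{}$ and $\Pi^*$-variation controlled by $L\omega$. It is nonempty --- the trivial lift of $\mfn{X}{}$ with constant $W$-component lies in $\mathcal{B}_1$ --- and, equipped with $d_{\Pi^*-\text{\protect\textup{var}}}$, it is a complete metric space, being a closed subset of the $\Pi^*$-variation space cut out by the closed condition $\widehat{\pi}_V(\mfn{Z}{})=\mfn{X}{}$ together with the (lower semicontinuous) control bound, with $G\Omega_{\Pi^*}$ closed by definition. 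For $\mfn{Z}{}\in\mathcal{B}_L$, Theorem \ref{_thm_lipGP_Integration} produces the almost $p_{\max}$-rough path \eqref{_eq_IntegralApproxSum} built from $h_0$ and Theorem \ref{_thm_RPtoAlmostRP} the associated rough path $\mathcal{F}(\mfn{Z}{})$, which is geometric since $\mfn{Z}{}$ is and all the constructions are $\Pi$-variation continuous (Remark \ref{_lem_PiIntRem2}); the block-triangular form of $h_0$, with $\mathrm{Id}_V$ in the $V\to V$ slot and nothing feeding into $V$, forces the $V$-part of the integral to equal $\widehat{\pi}_V(\mfn{Z}{})=\mfn{X}{}$ and, by the remark following Definition \ref{_dfn_lipGP_Integration}, makes $\mathcal{F}(\mfn{Z}{})$ a geometric $\Pi^*$-rough path. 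By Theorem \ref{_thm_BoundOnInt} its $\Pi^*$-variation is controlled by $K_0\omega$ with $K_0$ depending only on $\Vert h_0\Vert_{\text{Lip}^{\Gamma_0,\Pi_0}}$, $\Gamma_0$, $\Pi_0$ and $\omega(0,T)$; restricting to $[0,T_1]$ for $T_1$ small (so $\omega(0,T_1)$ is small, by continuity of $\omega$ and $\omega(0,0)=0$) makes $K_0\le L$, giving $\mathcal{F}(\mathcal{B}_L)\subseteq\mathcal{B}_L$ on $[0,T_1]$.

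The crux, and the main obstacle, is the contraction estimate. Given $\mfn{Z}{}^{(1)},\mfn{Z}{}^{(2)}\in\mathcal{B}_L$ I would form their joint lift, a geometric rough path over $V\oplus W\oplus W$ controlled by a fixed multiple of $\omega$ whose ``cross'' levels are controlled by $d_{\Pi^*-\text{\protect\textup{var}}}(\mfn{Z}{}^{(1)},\mfn{Z}{}^{(2)})$, and use $h_1$ and $h_2$ --- whose block structures are designed precisely so that the $\mathrm{Id}_W$ coupling of the two $W$-components and the increment $g_\xi(x,y_1,y_2)(d)$ coming from $f(x,y_1+\xi)-f(x,y_2+\xi)=g_\xi(x,y_1,y_2)(y_1-y_2)$ reconstruct, as rough integrals along the joint rough path, the difference $\mathcal{F}(\mfn{Z}{}^{(1)})-\mathcal{F}(\mfn{Z}{}^{(2)})$ and, one level up, its derivative in the direction $\mfn{Z}{}^{(1)}-\mfn{Z}{}^{(2)}$. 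Feeding the \lipGP one-forms $h_1$, $h_2$ into Theorem \ref{_thm_lipGP_Integration}, Theorem \ref{_thm_BoundOnInt} and the remainder bound \eqref{_eq_remainder_bound} yields $d_{\Pi^*-\text{\protect\textup{var}}}(\mathcal{F}(\mfn{Z}{}^{(1)}),\mathcal{F}(\mfn{Z}{}^{(2)}))\le C(\omega(0,T_1))\,d_{\Pi^*-\text{\protect\textup{var}}}(\mfn{Z}{}^{(1)},\mfn{Z}{}^{(2)})$ with $C(\omega(0,T_1))\to 0$ as $T_1\to 0$; the fixed $\rho>1$ in $h_2$ absorbs combinatorial constants so that $C<1$ becomes attainable after shrinking $T_1$ further. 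The delicate points are the construction of the joint (``difference'') rough path, identifying which terms of \eqref{_eq_IntegralApproxSum} survive the subtraction, and checking that the $\Pi^*$-degrees line up so that all leftover powers of $\omega$ are strictly positive.

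Finally, Banach's fixed-point theorem gives a unique $\mfn{Z}{}\in\mathcal{B}_L$ with $\mathcal{F}(\mfn{Z}{})=\mfn{Z}{}$ on $[0,T_1]$, i.e.\ a unique solution of \eqref{_eq_ULT_RDE} there. Since $T_1$ depends only on $\Pi$, the Lipschitz data and $\omega(0,T)$, I would then propagate the solution over $[T_1,2T_1]$, $[2T_1,3T_1]$, \dots, taking as initial condition at each stage the endpoint of the previous piece, concatenate the resulting geometric $\Pi^*$-rough paths using multiplicativity, and so cover $[0,T]$ in finitely many steps; uniqueness on $[0,T]$ follows because two solutions must agree on each $[jT_1,(j+1)T_1]$ by the local uniqueness and therefore have identical concatenations.
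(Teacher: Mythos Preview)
Your overall architecture --- localise, show stability, show contraction, patch --- is the same as the paper's, but the way you propose to obtain the contraction estimate has a genuine gap.

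The problematic sentence is: ``Given $\mfn{Z}{}^{(1)},\mfn{Z}{}^{(2)}\in\mathcal{B}_L$ I would form their joint lift, a geometric rough path over $V\oplus W\oplus W$ controlled by a fixed multiple of $\omega$ whose `cross' levels are controlled by $d_{\Pi^*-\text{var}}(\mfn{Z}{}^{(1)},\mfn{Z}{}^{(2)})$.'' For two \emph{arbitrary} geometric $\Pi^*$-rough paths sharing the $V$-component there is no canonical joint lift: you would have to supply all the mixed iterated integrals between $\mfn{Y}{}^{(1)}$ and $\mfn{Y}{}^{(2)}$, and there is no reason these should exist with the control you claim. This is not a detail that can be filled in routinely; it is precisely the obstruction that forces the paper (and \cite{rp_notes}) to abandon the direct contraction-mapping argument. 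The paper instead runs a Picard iteration $\mfn{Z}{0}(n)=(\mfn{X}{},\mfn{Y}{}(n))$ and, crucially, builds the \emph{coupled} iterates $\mfn{Z}{1}(n)=(\mfn{X}{},\mfn{Y}{}(n),\mfn{Y}{}(n+1))$ and $\mfn{Z}{2}(n)=(\mfn{X}{},\mfn{Y}{}(n),\mfn{Y}{}(n+1),\rho^n(\mfn{Y}{}(n+1)-\mfn{Y}{}(n)))$ \emph{recursively} by integrating $h_1$ and $h_2$. Because each $\mfn{Z}{2}(n)$ is produced by the integration theorem from $\mfn{Z}{2}(n-1)$, it is automatically a genuine rough path in the enlarged space, with all cross-terms supplied --- no joint-lift construction is ever needed for arbitrary pairs.

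You have also misread the role of $\rho$. It is not there to ``absorb combinatorial constants''; it is the mechanism that converts a uniform bound into geometric decay. The Scaling Lemma (Lemma~\ref{_lem_scaling}) together with Lemma~\ref{_lem_ULT_proof_4} shows that the $\Pi_2$-variation of $\mfn{Z}{2}(n)$, and hence of its last component $\rho^n(\mfn{Y}{}(n+1)-\mfn{Y}{}(n))$, stays controlled by $\omega$ uniformly in $n$; Lemma~\ref{_lem_ULT_proof_1} then reads this off as $\Vert\pi_R((\mfn{X}{},\mfn{Y}{}(n))_{s,t}-(\mfn{X}{},\mfn{Y}{}(n+1))_{s,t})\Vert\le C\rho^{-n}\omega(s,t)^{\Vert R\Vert/p_{\max}}$, giving Cauchyness directly. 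Uniqueness is handled by the same trick with a fourth iteration $\mfn{Z}{3}(n)$ comparing the Picard sequence to any putative second solution. If you rewrite your argument as ``define the coupled iterations, show by the Scaling Lemma that they remain uniformly controlled, deduce Cauchyness'' you will have the paper's proof; as it stands the joint-lift step does not go through.
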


The proof of Theorem \ref{_thm_ULT} is based on the proof of Lyons' Universal Limit Theorem in \cite{rp_notes}. 
We start with adapting some lemmas used in the original proof.
\begin{lem}\label{_lem_ULT_proof_1}
Let the Banach space $V$ be of the form $V=V^1\oplus\cdots\oplus V^k$ for some Banach spaces $V^1,\dots, V^k$.
Let $\Pi=(p_1,\dots,p_k)$ denote a $k$-tuple, $\eps>0$, and let $\omega$ be a control function.

Consider $\mfn{Z}{}=(\mfn{X}{},\mfn{Y}{})\in G\Omega_{\Pi\ast\Pi}(V\oplus V)$ and let $\mfn{W}{}\in G\Omega_{\Pi\ast\Pi}(V\oplus V)$ be the image of $\mfn{Z}{}$ under the linear map $(x,y)\to(x,\frac{y-x}{\eps})$. Assume that the $\Pi\ast\Pi$-variation of $\mfn{W}{}$ is controlled by $\omega$. Then there exists a constant $C$ depending only on $\Pi$, $\omega(0,T)$ and $\beta$, such that
\begin{equation*}
\left\Vert\pi_R\left(\mfn{X}{s,t}-\mfn{Y}{s,t}\right)\right\Vert \le C(\eps+\eps^{\Vert R\Vert}) \omega(s,t)^{\Vert R\Vert/p_{max}}, \ \forall (s,t)\in\Delta_T, \ \forall R\in\APi{1}.
\end{equation*}
\end{lem}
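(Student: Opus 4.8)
The plan is to realise both $X$ and $Y$ as level-wise linear images of $W$ and to expand their difference explicitly in powers of $\varepsilon$. First I would introduce the linear maps $\psi,\chi,\phi\colon V\oplus V\to V$ by $\psi(a,b)=a$, $\chi(a,b)=b$, $\phi(a,b)=a+\varepsilon b$, so that $\phi=\psi+\varepsilon\chi$. Since $Z$ is the image of $W$ under the inverse of $(x,y)\mapsto(x,(y-x)/\varepsilon)$, namely $(a,b)\mapsto(a,a+\varepsilon b)$, and the image construction is functorial under composition with linear maps, one has $X=\hat\psi(W)$ and $Y=\hat\phi(W)$. For a linear map $\ell$, the image $\hat\ell(W)$ acts on the $n$-th tensor level by the tensor power $\ell^{\otimes n}$: this is immediate for paths of finite variation and extends to geometric $\Pi\ast\Pi$-rough paths by continuity in the $\Pi\ast\Pi$-variation metric. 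Throughout I regard the $2k$ blocks of $V\oplus V$ as $V^1,\dots,V^k$ (first copy) followed by $V^1,\dots,V^k$ (second copy), all carrying the weights $p_1,\dots,p_k,p_1,\dots,p_k$.

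Next comes the combinatorial identity. On the $n$-th level, $\phi^{\otimes n}-\psi^{\otimes n}=\sum_{\emptyset\ne S\subseteq\{1,\dots,n\}}\varepsilon^{|S|}L_S$, where $L_S$ is the $n$-fold tensor product of copies of $\psi$ and $\chi$ with $\chi$ occupying the positions indexed by $S$. Decomposing $\pi_n(W_{s,t})=\sum_Q\pi_Q(W_{s,t})$ over $2k$-multi-indices $Q$ of length $n$, and using that $\psi$ annihilates the second copy while $\chi$ annihilates the first, one sees that $L_S$ kills every $\pi_Q(W_{s,t})$ unless $Q=R^S$, the $2k$-multi-index obtained from a $k$-multi-index $R=(r_1,\dots,r_n)$ by replacing $r_j$ with $r_j+k$ for $j\in S$; and then $L_S(\pi_{R^S}(W_{s,t}))=J(\pi_{R^S}(W_{s,t}))$ for the natural identification $J\colon V^{\otimes R^S}\to V^{\otimes R}$, which has operator norm at most $1$. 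Since the term $S=\emptyset$ reproduces $\pi_R(X_{s,t})$, this yields
\[
\pi_R(X_{s,t}-Y_{s,t})=-\sum_{\emptyset\ne S\subseteq\{1,\dots,\Vert R\Vert\}}\varepsilon^{|S|}\,J\bigl(\pi_{R^S}(W_{s,t})\bigr).
\]

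The estimate is then routine. The assumed finite $\Pi\ast\Pi$-variation of $W$ controlled by $\omega$ gives $\Vert\pi_{R^S}(W_{s,t})\Vert\le\omega(s,t)^{\degg_{\Pi\ast\Pi}(R^S)}/(\beta^{2k}\Gamma_{\Pi\ast\Pi}(R^S))$; moving an entry from block $i$ to block $k+i$ leaves its weight $1/p_i$ unchanged, so $\degg_{\Pi\ast\Pi}(R^S)=\degg_\Pi(R)$, and $\Gamma_{\Pi\ast\Pi}(\cdot)\ge c>0$ for a constant $c$ depending only on $k$ (the $\Gamma$-function is bounded away from zero). Summing over $S$, $\sum_{\emptyset\ne S}\varepsilon^{|S|}=(1+\varepsilon)^{\Vert R\Vert}-1\le 2^{\Vert R\Vert}(\varepsilon+\varepsilon^{\Vert R\Vert})$. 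Since $\APi{1}$ is finite, $\Vert R\Vert\le N$ for some $N=N(\Pi)$, and since $0\le\degg_\Pi(R)-\Vert R\Vert/p_{\max}\le 1$ one has $\omega(s,t)^{\degg_\Pi(R)}\le\max(1,\omega(0,T))\,\omega(s,t)^{\Vert R\Vert/p_{\max}}$. Combining these estimates gives the claim with $C=2^N\max(1,\omega(0,T))/(\beta^{2k}c)$, which depends only on $\Pi$, $\omega(0,T)$ and $\beta$.

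I expect the only genuine obstacle to be the first step: verifying precisely that $X$ and $Y$ are the level-wise tensor-power images of $W$ under $\psi$ and $\phi$, which rests on the functoriality and $\Pi$-variation continuity of the rough-path image construction under linear maps. Everything after that is multi-index bookkeeping together with the elementary bound on $(1+\varepsilon)^{\Vert R\Vert}-1$.
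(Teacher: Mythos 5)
Your proof is correct and follows essentially the same route as the paper's: both write $Y=X+\eps\cdot\frac{Y-X}{\eps}$ (you phrase this as $\phi=\psi+\eps\chi$), expand the iterated integrals of $Y$ multilinearly in $\eps$ to express $\pi_R(X_{s,t}-Y_{s,t})$ as a signed sum of $\eps^{|S|}$-weighted projections $\pi_{R^S}(W_{s,t})$, extend from bounded-variation paths to geometric $\Pi\ast\Pi$-rough paths by continuity in the $\Pi\ast\Pi$-variation topology, and then apply the control on $W$. The paper leaves the final quantitative estimate as "implied by continuity and the control"; you spell out the bound $(1+\eps)^{\|R\|}-1\le 2^{\|R\|}(\eps+\eps^{\|R\|})$, the exponent comparison $\omega^{\degg_\Pi(R)}\le\max(1,\omega(0,T))\,\omega^{\|R\|/p_{\max}}$, and the uniform lower bound on $\Gamma_{\Pi\ast\Pi}$ over the finite set $\APi{1}$, all of which are correct and fill in the details the paper omits.
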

\begin{proof}
The claim is equivalent to Lemma 5.6 of \cite{rp_notes} adapted to the inhomogeneous smoothness case and the proof is analogous to the proof of the referred lemma. 

Let $R=(r_1,\dots,r_l)\in\APi{1}$ and $(s,t)\in\Delta_T$.
First, assuming that $\mfn{Z}{}=(\mfn{X}{},\mfn{Y}{})\in V\oplus V$ has bounded variation using the notation introduced in equation \eqref{_eq_iterated_int} and writing $\mfn{Y}{}=\mfn{X}{}+\eps\frac{\mfn{Y}{}-\mfn{X}{}}{\eps}$, we get
\begin{equation*}
\mfnc{Y}{R}{s,t}=\mfnc{X}{R}{s,t}+\sum_{
\begin{smallmatrix}
k_1,\dots,k_l\in\{0,1\}\\
k_1+\cdots+k_l>0
\end{smallmatrix}
}\eps^{k_1+\cdots+k_l}\mfnc{W}{(r_1+k_1*l,\dots,r_l+k_l*l)}{s,t}.
\end{equation*}

The assertion is implied by the continuity in the $\Pi\ast\Pi$-variation topology and by the control on $\mfn{W}{}$.
\end{proof}

\begin{lem}[Scaling Lemma, inhomogeneous version]\label{_lem_scaling}
Let the Banach space $V$ be of the form $V=V^1\oplus\cdots\oplus V^k$ for some Banach spaces $V^1,\dots, V^k$.
Let $\Pi=(p_1,\dots,p_k)$ denote a $k$-tuple, let $\omega$ be a control function and let $M\ge 1$ be a real number. Let $E=V^1\oplus\cdots\oplus V^l$ and $F=V^{l+1}\oplus\cdots\oplus V^k$ be Banach spaces.
Let $\Pi_1=(p_1,\dots,p_l)$ and $\Pi_2=(p_{l+1},\dots,p_k)$ denote the corresponding $l$ and $(k-l)$-tuples. 

Let $\mfn{Z}{}=(\mfn{X}{},\mfn{Y}{}):\Delta_T\to \TPi{1}(V)$ be a geometric $\Pi$-rough path such that
\begin{enumerate}[(i)]
\item the $\Pi$-variation of $\mfn{Z}{}$ is controlled by $M\omega$,
\item the $\Pi_1$-variation of $\mfn{X}{}=\hat{\pi}_E(\mfn{Z}{})$ is controlled by $\omega$,
\item $\mfn{Y}{}=\hat{\pi}_F(\mfn{Z}{})$.
\end{enumerate}

Then, for all $0\le \eps \le M^{-s_{m^*}}$,
the $\Pi$-variation of $(\mfn{X}{},\eps\mfn{Y}{})$ is controlled by $\omega$, where
\[
s_{m^*}=\max_{s_m\le 1}\left\{s_m\in S^{\Pi}\right\}=\max_{R\in\APi{1}}\deg_{\Pi}(R).
\]
\end{lem}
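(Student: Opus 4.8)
\begin{myproofsketch}
The plan is to view $(\mfn{X}{},\eps\mfn{Y}{})$ as the image of $\mfn{Z}{}$ under the block-diagonal linear dilation $\delta_\eps\colon V\to V$ which is the identity on $E=V^1\oplus\cdots\oplus V^l$ and is multiplication by $\eps$ on $F=V^{l+1}\oplus\cdots\oplus V^k$, and then to read off the scaling this induces on each homogeneous tensor component. Since $\delta_\eps$ is bounded linear and respects the grading of $T(V)$ by $k$-multi-indices, the induced algebra homomorphism $T(\delta_\eps)$ maps multiplicative functionals to multiplicative functionals, restricts to $\TPi{1}(V)$, maps smooth rough paths to smooth rough paths and is continuous in the $\Pi$-variation metric; hence $(\mfn{X}{},\eps\mfn{Y}{})$ is again a geometric $\Pi$-rough path and only the $\Pi$-variation estimate with control $\omega$ remains to be verified. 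On a component $V^{\otimes R}$ with $R=(r_1,\dots,r_m)$, the map $T(\delta_\eps)$ acts as multiplication by $\eps^{j(R)}$, where $j(R):=\#\{\,i:\,r_i>l\,\}$ counts the letters of $R$ coming from the $F$-block; therefore
\[
\pi_R\bigl((\mfn{X}{},\eps\mfn{Y}{})_{s,t}\bigr)=\eps^{j(R)}\,\pi_R\bigl(\mfn{Z}{s,t}\bigr)\qquad\text{for all }R\in\APi{1}\text{ and }(s,t)\in\Delta_T.
\]

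Next I would split $\APi{1}$ according to whether $R$ contains a letter from the $F$-block. If $j(R)\ge1$, hypothesis $(i)$ gives, with the constant $\beta$ coming from the control of $\mfn{Z}{}$ by $M\omega$,
\[
\bigl\Vert\pi_R\bigl((\mfn{X}{},\eps\mfn{Y}{})_{s,t}\bigr)\bigr\Vert=\eps^{j(R)}\bigl\Vert\pi_R\bigl(\mfn{Z}{s,t}\bigr)\bigr\Vert\le\eps^{j(R)}M^{\degg_{\Pi}(R)}\,\frac{\omega(s,t)^{\degg_{\Pi}(R)}}{\beta^{k}\Gamma_{\Pi}(R)};
\]
since $M\ge1$, $0\le\eps\le M^{-s_{m^*}}\le1$, $j(R)\ge1$ and $\degg_{\Pi}(R)\le s_{m^*}=\max_{R'\in\APi{1}}\degg_{\Pi}(R')$, we have $\eps^{j(R)}M^{\degg_{\Pi}(R)}\le\eps\,M^{s_{m^*}}\le1$, so the right-hand side is at most $\omega(s,t)^{\degg_{\Pi}(R)}/(\beta^{k}\Gamma_{\Pi}(R))$. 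If instead $j(R)=0$, then every letter of $R$ lies in $\{1,\dots,l\}$, so $\pi_R\bigl((\mfn{X}{},\eps\mfn{Y}{})_{s,t}\bigr)=\pi_R\bigl(\mfn{Z}{s,t}\bigr)=\pi_R\bigl(\mfn{X}{s,t}\bigr)$ because $\mfn{X}{}=\hat{\pi}_E(\mfn{Z}{})$; moreover $\degg_{\Pi}(R)=\degg_{\Pi_1}(R)$ and $\Gamma_{\Pi}(R)=\Gamma_{\Pi_1}(R)$ (the missing factors equalling $0!=1$), so hypothesis $(ii)$ bounds this term by $\omega(s,t)^{\degg_{\Pi}(R)}/({\beta'}^{\,l}\Gamma_{\Pi}(R))$, with $\beta'$ the constant coming from the control of $\mfn{X}{}$.

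Replacing $\beta$ and $\beta'$ by one positive constant $\bar\beta$ with $\bar\beta^{k}\le\min\{\beta^{k},{\beta'}^{\,l}\}$, both cases give $\Vert\pi_R((\mfn{X}{},\eps\mfn{Y}{})_{s,t})\Vert\le\omega(s,t)^{\degg_{\Pi}(R)}/(\bar\beta^{k}\Gamma_{\Pi}(R))$ for every $R\in\APi{1}$ and every $(s,t)\in\Delta_T$, which is exactly the statement that the $\Pi$-variation of $(\mfn{X}{},\eps\mfn{Y}{})$ is controlled by $\omega$. The single delicate step is the first paragraph --- making precise that passing to the image under $\delta_\eps$ multiplies the $R$-component by $\eps^{j(R)}$ and preserves both multiplicativity and geometricity; after that the argument reduces to the elementary inequality $\eps^{j(R)}M^{\degg_{\Pi}(R)}\le1$, which is precisely the reason the threshold for $\eps$ has the form $M^{-s_{m^*}}$ with $s_{m^*}=\max_{R\in\APi{1}}\degg_{\Pi}(R)$.
\end{myproofsketch}
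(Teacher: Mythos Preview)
Your proposal is correct and follows essentially the same approach as the paper: realise $(\mfn{X}{},\eps\mfn{Y}{})$ as the image of $\mfn{Z}{}$ under the block-diagonal dilation $(x,y)\mapsto(x,\eps y)$, observe that the $V^{\otimes R}$-component scales by $\eps^{j(R)}$ (the paper writes $|R|_F$ for your $j(R)$), and conclude via the elementary bound $\eps^{j(R)}M^{\degg_{\Pi}(R)}\le 1$. You are in fact more careful than the paper in separating out the case $j(R)=0$ and invoking hypothesis~(ii) there --- this is genuinely needed, since for such $R$ one has $\eps^{j(R)}M^{\degg_{\Pi}(R)}=M^{\degg_{\Pi}(R)}\ge 1$ --- whereas the paper's displayed inequality uses only hypothesis~(i) and leaves this case implicit.
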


\begin{proof}
This lemma is analogous to Lemma 5.8 of \cite{rp_notes}, adapted to the inhomogeneous smoothness case. 

Let $\mfn{W}{}\in G\Omega_{\Pi}(V)$ denote the image of $\mfn{Z}{}$ under the linear map $(x,y)\to(x,\epsilon y)$. For a multi-index $R=(r_1,\cdots,r_m)$, let $|R|_F$ denote the cardinality of the set $\{r|\ r\in R, r>l\}$. Then if $\mfn{Z}{}$ has bounded variation, by simple rescaling arguments we get
\begin{equation*}
  \mfnc{W}{R}{s,t}=\eps^{|R|_F}\mfnc{Z}{R}{s,t}.
\end{equation*}
By continuity, the last equality holds for general geometric $\Pi$-rough path $\mfn{Z}{}$. 
This following inequality is now implied and completes the proof:
\begin{equation*}
\left\Vert\pi_R\left(\mfn{W}{s,t}\right)\right\Vert\le 
\eps^{|R|_F}M^{\deg_{\Pi}(R)}
\frac{\omega(s,t)^{\degg_{\Pi}(R)}}{\beta^k\Gamma_{\Pi}(R)}.
\end{equation*}

\end{proof}

Given the one-forms $h_i$, $i=1,2,3$, we define the following sequences of rough paths
\begin{equation*}
\mfn{Z}{0}(0)=(\mfn{X}{},0), \text{ and } \mfn{Z}{0}(n+1)=\int h_0(\mfn{Z}{0}(n))d h_0(\mfn{Z}{0}(n)), 
\end{equation*}

\begin{equation*}
\mfn{Z}{1}(0)=(\mfn{X}{},0,\mfn{Y}{}(1)), \text{ and } \mfn{Z}{1}(n+1)=\int h_1(\mfn{Z}{1}(n))d h_1(\mfn{Z}{1}(n)), 
\end{equation*}

\begin{equation*}
\mfn{Z}{2}(0)=(\mfn{X}{},0,\mfn{Y}{}(1),\mfn{Y}{}(1)), \text{ and } \mfn{Z}{2}(n+1)=\int h_2(\mfn{Z}{2}(n))d h_2(\mfn{Z}{2}(n)), 
\end{equation*}
for $n=0,1,\dots$, where $\mfn{Y}{}(n)=\hat{\pi}_W(\mfn{Z}{0}(n))$.

The definition of the above iterations imply the following lemma. 

\begin{lem}\label{_lem_iterations}
For all $n\ge 0$,
\begin{eqnarray*}
\mfn{Z}{0}(n)&=&(\mfn{X}{},\mfn{Y}{}(n))\\
\mfn{Z}{1}(n)&=&(\mfn{X}{},\mfn{Y}{}(n),\mfn{Y}{}(n+1))\\
\mfn{Z}{2}(n)&=&(\mfn{X}{},\mfn{Y}{}(n),\mfn{Y}{}(n+1),\rho^n(\mfn{Y}{}(n+1)-\mfn{Y}{}(n))).
\end{eqnarray*}
Furthermore, if the $\Pi$-variation of $\mfn{X}{}$ is controlled by $\omega$, then the $\Pi_i$-variation of $\mfn{Z}{i}(0)$ is controlled by $M\omega$ for $i=1,2$ respectively on $[0,T_{\rho}]$, 
where $M$ and $T_{\rho}$ are 
 defined below. 
\end{lem}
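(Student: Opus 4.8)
The plan is to verify the three identities directly from the definitions of the iterated integrals and one-forms $h_0,h_1,h_2$, and then to establish the variation bounds by the Scaling Lemma. First I would prove the identities for $\mfn{Z}{0}(n)$, $\mfn{Z}{1}(n)$ and $\mfn{Z}{2}(n)$ by induction on $n$. For $n=0$ these are immediate from the stated initial data, bearing in mind $\mfn{Y}{}(0)=\hat\pi_W(\mfn{Z}{0}(0))=0$ and $\rho^0(\mfn{Y}{}(1)-\mfn{Y}{}(0))=\mfn{Y}{}(1)$. For the inductive step, the key observation is that the block-matrix structure of $h_0$ sends $(x,y)$ to the pair $(\text{Id}_V, f(x,y+\xi))$ acting on the driving increment, so that $\int h_0(\mfn{Z}{0}(n))\,dh_0(\mfn{Z}{0}(n))$ has first component equal to $\hat\pi_V$ of the integral, which is $\mfn{X}{}$ (since $h_0$ leaves the $V$-coordinate as the identity), and second component the rough path obtained by integrating $f(\mfn{X}{},\mfn{Y}{}(n)+\xi)$ against $\mfn{X}{}$, which is exactly $\mfn{Y}{}(n+1)$ by the definition of the iteration. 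The same calculation applied to $h_1$ shows that the $(k+1)^{\text{st}}$ and $(k+2)^{\text{nd}}$ components of $\mfn{Z}{1}(n+1)$ are $\mfn{Y}{}(n+1)$ (carried along by the $\text{Id}_W$ in the middle row) and $\int f(\mfn{X}{},\mfn{Y}{}(n+1)+\xi)\,d\mfn{X}{}=\mfn{Y}{}(n+2)$; for $h_2$ one additionally uses the bottom row $\rho g_\xi(x,y_1,y_2)(d)$ together with the defining property $f(x,y_1+\xi)-f(x,y_2+\xi)=g_\xi(x,y_1,y_2)(y_1-y_2)$ to identify the last component of $\mfn{Z}{2}(n+1)$ as $\rho^{n+1}(\mfn{Y}{}(n+2)-\mfn{Y}{}(n+1))$.

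Next I would establish the variation statement. For $i=1,2$, the rough path $\mfn{Z}{i}(0)$ consists of $\mfn{X}{}$ in the first $k$ coordinates and finitely many copies of $\mfn{Y}{}(1)$ (and $\mfn{Y}{}(1)-\mfn{Y}{}(0)=\mfn{Y}{}(1)$) in the remaining coordinates, all of which are linear images of the single $\Pi_0$-rough path $\mfn{Z}{0}(1)=(\mfn{X}{},\mfn{Y}{}(1))$. By Theorem \ref{_thm_BoundOnInt} applied to the integral defining $\mfn{Z}{0}(1)$, the $W$-component $\mfn{Y}{}(1)$ has $p_{\max}$-variation controlled by $K'\omega$ for a constant $K'$ depending only on $\Vert h_0\Vert_{\text{Lip}^{\Gamma_0,\Pi_0}}$, $\Gamma_0$, $\Pi_0$ and $\omega(0,T)$; combining this with the control $\omega$ on $\mfn{X}{}$ gives a control $M\omega$ on the joint rough path $(\mfn{X}{},\mfn{Y}{}(1))$ for a suitable $M\ge 1$, and hence on $\mfn{Z}{i}(0)$, on a possibly shortened interval $[0,T_\rho]$ chosen so that the relevant estimates close (the factor $\rho^n$ in the bottom coordinate of $\mfn{Z}{2}(0)$ is harmless at $n=0$). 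The precise definition of $M$ and $T_\rho$ referred to in the statement is the one that makes these constants consistent across the three iterations.

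The main obstacle is the bookkeeping in the inductive step rather than any conceptual difficulty: one must carefully track which coordinate of the extended Banach space each block of the matrix $h_i$ feeds into, and confirm that the "carry-along" coordinates (the $\text{Id}_W$ blocks) reproduce $\mfn{Y}{}(n+1)$ exactly and not some reparametrised version, and that the bottom row of $h_2$ produces precisely $\rho^{n+1}$ times the increment of the $\mfn{Y}{}$-iterates. This requires being careful that the integral $\int h_i(\mfn{Z}{})\,dh_i(\mfn{Z}{})$ in Definition \ref{_dfn_lipGP_Integration} is the integral of the one-form $dh_i$ along $\mfn{Z}{}$, so that the image rough path's components are themselves iterated integrals whose first level is governed by the matrix $h_i$; granting this, the identities follow by matching first-level increments and invoking uniqueness of the rough path extension. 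The variation bounds then reduce to a single application of Theorem \ref{_thm_BoundOnInt} plus the observation that linear images and finite direct sums of controlled rough paths remain controlled (up to a constant), which is where the Scaling Lemma and Lemma \ref{_lem_ULT_proof_1} will be used in the subsequent convergence argument, though for this lemma only the elementary direct-sum bound is needed.
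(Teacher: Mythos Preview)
Your proposal is correct and follows essentially the same approach as the paper. The paper treats the three identities as immediate consequences of the definitions of the iterations (it simply writes ``The definition of the above iterations imply the following lemma'' with no further argument), and your induction on $n$ using the block structure of $h_0,h_1,h_2$ is the natural way to make this precise; likewise, the control on $\mfn{Z}{i}(0)$ is obtained in the paper exactly as you describe, via Theorem~\ref{_thm_BoundOnInt} applied to the integral defining $\mfn{Z}{0}(1)=(\mfn{X}{},\mfn{Y}{}(1))$ and then passing to the linear images, with $M$ and $T_\rho$ defined in the paragraph following the lemma.
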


Recall the definitions 
$\Gamma_0=(\gamma_1,\dots,\gamma_{k+1})$, $\Pi_0=(p_1,\dots,p_k,p_{\max})$, $\Gamma_2=(\gamma_1,\dots,\gamma_{k+3})$, $\Pi_2=(p_1,\dots,p_k,p_{\max},p_{\max},p_{\max})$, furthermore we define 
$\Gamma_1=(\gamma_1,\dots,\gamma_{k+2})$, $\Pi_1=(p_1,\dots,p_k,p_{\max},p_{\max})$. 
By Theorem \ref{_thm_BoundOnInt}, there exists a constant $M_i$ depending only on $\Pi_i$, $\Gamma_i$, $\hat{\Gamma_i}$ and polynomially on the $\text{\protect{\textup{Lip}}}^{\Gamma_i,\Pi_i}$-norm of $h_i$, such that if $\mfn{Z}{i}$ is a rough path in the appropriate space with $\Pi_i$-variation controlled by some control $\omega$ such that $\omega(0,T)<1$, then the $\Pi_i$-variation of $\int h_i(\mfn{Z}{i})d\mfn{Z}{i}$ is controlled by $\omega$ for $i=0,1,2$ respectively. We define $M=\max(M_0,M_1,M_2)$, and without loss of generality we assume that $M\ge 1$. We chose $\eps=M^{-s_{m^*}}$.

Let $\omega_0$ be a control of the $\Pi$-variation of $\mfn{X}{}$. Let $T_{\rho}>0$ be chosen to satisfy $\omega_0(0,T_{\rho})=\eps^{p_{\max}}$. 
Note that for $R\in\APi{1}$,
\begin{equation*}
1\ge \degg_{\Pi}(R) = \sum_{i=1}^k\frac{n_j(R)}{p_i}\ge \sum_{i=1}^k\frac{n_j(R)}{p_{\max}} =\frac{\Vert R\Vert}{p_{\max}}.
\end{equation*}
This implies that by setting $\omega=\eps^{-p_{\max}}\omega_0$, $\eps^{-1}\mfn{X}{}$ is controlled by $\omega$ and $\omega(0,T_{\rho})\le 1$. 

\begin{lem}\label{_lem_ULT_proof_4}
For all $n\ge 0$, the $\Pi_0$, $\Pi_1$ and $\Pi_2$-variation of the following rough paths respectively
\begin{eqnarray*}
&&(\eps^{-1}\mfn{X}{},\mfn{Y}{}(n))\\
&&(\eps^{-1}\mfn{X}{},\mfn{Y}{}(n),\mfn{Y}{}(n+1))\\
\text{and}&&(\eps^{-1}\mfn{X}{},\mfn{Y}{}(n),\mfn{Y}{}(n+1),\rho^n(\mfn{Y}{}(n+1)-\mfn{Y}{}(n)))
\end{eqnarray*}
are controlled by $\omega$ on $[0,T_{\rho}]$.
\end{lem}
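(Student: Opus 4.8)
The plan is to prove the three controls simultaneously by induction on $n$, treating the index $i=0,1,2$ uniformly via the recursion $\mfn{Z}{i}(n+1)=\int h_i(\mfn{Z}{i}(n))\,dh_i(\mfn{Z}{i}(n))$ together with Lemma \ref{_lem_iterations}. The base case $n=0$ is exactly the second assertion of Lemma \ref{_lem_iterations}: on $[0,T_\rho]$, the $\Pi_i$-variation of $\mfn{Z}{i}(0)$ is controlled by $M\omega$, and rescaling the $V$-component by $\eps^{-1}$ replaces $\mfn{X}{}$ by $\eps^{-1}\mfn{X}{}$, which by the displayed computation preceding this lemma is controlled by $\omega$ with $\omega(0,T_\rho)\le 1$; combined with the Scaling Lemma (Lemma \ref{_lem_scaling}) applied with $M$ and the chosen $\eps=M^{-s_{m^*}}$, this gives that the rescaled $\mfn{Z}{i}(0)$ has $\Pi_i$-variation controlled by $\omega$. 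Here one uses that $(\eps^{-1}\mfn{X}{},\mfn{Y}{}(0))$, etc., is the image of $\mfn{Z}{i}(0)$ under a linear map acting only on the non-$\mfn{X}{}$ block, so Lemma \ref{_lem_scaling} applies with $E$ the $\eps^{-1}\mfn{X}{}$-part and $F$ the remaining part.

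For the inductive step, assume $(\eps^{-1}\mfn{X}{},\mfn{Y}{}(n)),(\eps^{-1}\mfn{X}{},\mfn{Y}{}(n),\mfn{Y}{}(n+1)),(\eps^{-1}\mfn{X}{},\mfn{Y}{}(n),\mfn{Y}{}(n+1),\rho^n(\mfn{Y}{}(n+1)-\mfn{Y}{}(n)))$ all have $\Pi_i$-variation controlled by $\omega$ on $[0,T_\rho]$. First observe that $\mfn{Z}{i}(n)$ itself (with the genuine $\mfn{X}{}$, not $\eps^{-1}\mfn{X}{}$) then has $\Pi_i$-variation controlled by $\omega$: indeed $\mfn{Z}{i}(n)$ is the image of $(\eps^{-1}\mfn{X}{},\dots)$ under the linear map scaling the $V$-block by $\eps$, and since every multi-index $R\in\APi[i]{1}$ hitting that block picks up a factor $\eps^{n_{\text{(that block)}}(R)}\le 1$ (as $\eps\le 1$), the bound is preserved; more carefully, one invokes the Scaling Lemma once more, or simply the monotonicity $\eps\le 1$. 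Now apply the bound from the paragraph before the lemma: for $\omega$ with $\omega(0,T_\rho)<1$, the integral $\int h_i(\mfn{Z}{i}(n))\,d\mfn{Z}{i}(n)$ has $\Pi_i$-variation controlled by $M_i\omega\le M\omega$. By Lemma \ref{_lem_iterations} this integral is $\mfn{Z}{i}(n+1)$, whose components are $(\mfn{X}{},\mfn{Y}{}(n+1))$, $(\mfn{X}{},\mfn{Y}{}(n+1),\mfn{Y}{}(n+2))$, and $(\mfn{X}{},\mfn{Y}{}(n+1),\mfn{Y}{}(n+2),\rho^{n+1}(\mfn{Y}{}(n+2)-\mfn{Y}{}(n+1)))$ respectively. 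Finally apply the Scaling Lemma (Lemma \ref{_lem_scaling}) with this $M$, with $E$ the $\mfn{X}{}$-block, $F$ the complementary block, and $\eps=M^{-s_{m^*}}$: since the $\Pi_i$-variation of the full rough path is controlled by $M\omega$ and that of $\mfn{X}{}$ by $\omega$ (recall $\mfn{X}{}$ is controlled by $\eps\,\omega$, hence by $\omega$), we conclude that $(\eps^{-1}\mfn{X}{},\dots)$ — obtained by scaling the $\mfn{X}{}$-block — has $\Pi_i$-variation controlled by $\omega$, which closes the induction.

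The main obstacle is bookkeeping the role of $\eps$ and $M$ correctly: one must check that the Scaling Lemma's admissibility constraint $0\le\eps\le M^{-s_{m^*}}$ is met with equality by our choice $\eps=M^{-s_{m^*}}$, and that after rescaling the $V$-block up by $\eps$ one still lands in the regime $\omega(0,T_\rho)<1$ needed to invoke the $M_i$-bound on the integral; this is precisely why $T_\rho$ was defined by $\omega_0(0,T_\rho)=\eps^{p_{\max}}$ and $\omega=\eps^{-p_{\max}}\omega_0$, giving $\omega(0,T_\rho)=1$ (or $\le 1$ after the harmless strict/non-strict adjustment). A secondary subtlety is that the Scaling Lemma as stated scales a \emph{tail} block $F=V^{l+1}\oplus\cdots\oplus V^k$ by $\eps$, whereas here we want to scale the \emph{head} $\mfn{X}{}$-block by $\eps^{-1}$; this is handled by noting that scaling the head by $\eps^{-1}$ and scaling the tail by $\eps$ differ only by an overall $\eps^{-1}$ dilation of the whole rough path, under which $\deg$-homogeneous bounds transform transparently, or simply by reindexing so the $\mfn{X}{}$-block plays the role of $F$ with scaling factor $\eps$ and reading the conclusion for its reciprocal. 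Everything else is a routine application of continuity in $\Pi$-variation and the previously established bounds. $\Box$
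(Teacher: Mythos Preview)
Your overall architecture—induction on $n$, descending from $(\eps^{-1}\mfn{X}{},\mfn{Y}{}(n),\dots)$ to $\mfn{Z}{i}(n)$ by shrinking the $V$-block, invoking the integral bound to get $\mfn{Z}{i}(n+1)$ controlled by $M\omega$, then appealing to the Scaling Lemma to close—does match the paper's own remark that the proof is ``based on the Scaling lemma \ref{_lem_scaling} and analogous to the proof of Proposition~5.9 in \cite{rp_notes}.'' The base case and the first half of the inductive step are fine.

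The difficulty is the step you label a ``secondary subtlety''; it is in fact the crux, and neither of your proposed fixes works. Lemma~\ref{_lem_scaling} takes $(X,Y)$ controlled by $M\omega$ with $X$ controlled by $\omega$ and returns $(X,\eps Y)$ controlled by $\omega$: it scales the \emph{tail} block \emph{down} by $\eps$. You need $(\eps^{-1}\mfn{X}{},\mfn{Y}{}(n+1),\dots)$ controlled by $\omega$, i.e.\ the \emph{head} block scaled \emph{up} by $\eps^{-1}$. Your option~(i)—``overall $\eps^{-1}$ dilation under which $\deg$-homogeneous bounds transform transparently''—fails because a global $\eps^{-1}$-dilation multiplies $\pi_R$ by $\eps^{-\Vert R\Vert}$, not $\eps^{-\deg_{\Pi}(R)}$; a path controlled by $\omega$ becomes controlled only by $\eps^{-p_{\max}}\omega$, not by $\omega$. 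Your option~(ii)—reindex so the $\mfn{X}{}$-block plays the role of $F$—would require the complementary $(\mfn{Y}{}(n+1),\dots)$-block to already be controlled by $\omega$, but at this stage you only have it controlled by $M\omega$. Concretely, for a multi-index $R$ with $|R|_V$ entries in the $V$-block and at least one entry in a $W$-block,
\[
\bigl\Vert\pi_R\bigl((\eps^{-1}\mfn{X}{},\mfn{Y}{}(n+1),\dots)\bigr)\bigr\Vert
=\eps^{-|R|_V}\bigl\Vert\pi_R\bigl(\mfn{Z}{i}(n+1)\bigr)\bigr\Vert
\le M^{\,s_{m^*}|R|_V+\deg_{\Pi_i}(R)}\,\frac{\omega(s,t)^{\deg_{\Pi_i}(R)}}{\beta^{k}\Gamma_{\Pi_i}(R)},
\]
and since $s_{m^*}|R|_V+\deg_{\Pi_i}(R)\ge 0$ and $M\ge 1$, nothing forces the prefactor below $1$. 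The induction does not close as written. The repair (as in \cite{rp_notes}) exploits the specific block structure of $h_i$: the $V$-output is $Id_V$ and every $W$-output is driven solely by $d\mfn{X}{}$, so conjugating $h_i$ by the dilation $(x,y,\dots)\mapsto(\eps^{-1}x,y,\dots)$ produces a one-form whose non-identity entries carry an explicit factor $\eps$; this is what offsets the $M$ when one runs the iteration on the rescaled path and then applies Lemma~\ref{_lem_scaling} with $E$ equal to the $\eps^{-1}\mfn{X}{}$-block. Your sketch does not invoke this mechanism.
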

The proof is based on the Scaling lemma \ref{_lem_scaling} and analogous to the proof of Proposition 5.9 in \cite{rp_notes}.

Now we prove the main theorem. We follow the proof of the Universal Limit Theorem corresponding to the homogeneous case presented in \cite{rp_notes}.

\begin{proof}[Proof of Theorem \ref{_thm_ULT}:]
By Lemma \ref{_lem_ULT_proof_4}, the $\Pi_2$-variation of $\mfn{Z}{2}(n)$ for all $n\ge 0$ is controlled by $\omega$ on $[0,T_{\rho}]$. We define the linear map $A:V\oplus W\oplus W\oplus W\to (V\oplus W)\oplus (V\oplus W)$ by
\begin{equation*}
A(x,y_1,y_2,d)=((x,y_1),(0,d)).
\end{equation*}
This linear map has norm 1. Note that 
\begin{equation*}
A(\mfn{Z}{2}(n))=
((\mfn{X}{},\mfn{Y}{}(n)),\rho^n(0,\mfn{Y}{}(n+1)-\mfn{Y}{}(n)))=
((\mfn{X}{},\mfn{Y}{}(n)),\rho^n[(\mfn{X}{},\mfn{Y}{}(n+1))-(\mfn{X}{},\mfn{Y}{}(n))])
\end{equation*}
is controlled by $\omega$ on $[0,T_{\rho}]$. 
Then Lemma \ref{_lem_ULT_proof_1} and Lemma \ref{_lem_ULT_proof_4} imply the existence of a constant $C$ depending only on $\Pi$, $\omega(0,T)$ and $\beta$, such that for all $(s,t)\in\Delta_T$
\begin{eqnarray}
\left\Vert
\pi_R\left(((\mfn{X}{},\mfn{Y}{}(n))_{s,t}-
(\mfn{X}{},\mfn{Y}{}(n+1))_{s,t}\right)
\right\Vert& \le &
C
\rho^{-n}\omega(s,t)^{\Vert R\Vert/p_{\max}}, \ \forall R\in\APig{\hat{\Pi}}{1}
.\nonumber\\
\label{_eq_main_ineq}
\end{eqnarray}
The inequality implies that $(\mfn{X}{},\mfn{Y}{}(n))$ converges in the $\Pi_0$-variational topology on the interval $[0,T_{\rho}]$ to a rough path $(\mfn{X}{},\mfn{Y}{})\in G\Omega_{\Pi_0}$, which is also a solution to the RDE \eqref{_eq_ULT_RDE}. 

Note that once $\rho$ is chosen, $T_{\rho}$ is bounded from below where the bound only depends on the Lip-norm of $h_0$, $h_2$, $\Pi$, $\Gamma_2$ and the modulus of continuity of $\omega$ on $[0,T]$. This implies that one can paste together local solutions in order to get a solution on the whole interval $[0,T]$. 

In order to prove uniqueness, we assume that $\wh{\mfn{Z}{}}=(\mfn{X}{},\wh{\mfn{Y}{}})$ is also a solution to the RDE \eqref{_eq_ULT_RDE}. We compare $\mfn{Y}{}(n)$ and $\wh{\mfn{Y}{}}$ by defining the function $h_3:V\oplus W\oplus W\oplus W\to \text{End}(V\oplus W\oplus W\oplus W)$ by
\begin{equation*}
h_3(x,y,\wh{y},\wh{d})=\left(
\begin{array}{cccc}
Id_V & 0 & 0 & 0 \\
f(y+\xi) & 0 & 0 & 0 \\
0 & 0 & Id_W & 0 \\
\rho g_{\xi}(y,\wh{y})(\wh{d}) & 0 & 0 & 0
\end{array}
\right)
\end{equation*}
and defining $\mfn{Z}{3}(n)$ by
\begin{equation*}
\mfn{Z}{3}(0)=(\mfn{X}{},0,\wh{\mfn{Y}{}},\wh{\mfn{Y}{}}), \text{and } \mfn{Z}{3}(n+1)=\int h_3(\mfn{Z}{3}(n)).
\end{equation*}
Arguments analogous to the proof of Lemma \ref{_lem_iterations} (ref. \cite{rp_notes}) imply that
\begin{equation*}
\mfn{Z}{3}(n)=(\mfn{X}{},\mfn{Y}{}(n),\wh{\mfn{Y}{}},\rho^n(\wh{\mfn{Y}{}}-\mfn{Y}{}(n))).
\end{equation*}
Now analogously to Lemma \ref{_lem_ULT_proof_4}, the $\Pi_2$-variation of $\mfn{Z}{3}(n)$ is controlled by $\omega$ on a small enough interval. Then by Lemma \ref{_lem_ULT_proof_1}, $\mfn{Y}{}=\wh{\mfn{Y}{}}$ on the same interval. The uniqueness of $\mfn{Y}{}$ is implied by the uniform continuity of $\omega$.

Define $I_f(\mfn{X}{},\xi)=(\mfn{X}{},\mfn{Y}{})$. Analogous arguments to the proof of the Universal Limit Theorem in \cite{rp_notes} imply that $I_f$ is continuous from $G\Omega_{\Pi}(V)\times W\to G\Omega_{\Pi_0}(V\oplus W)$ in the $\Pi$-$\Pi_0$-variation topology. 
\end{proof}


\subsection*{Acknowledgements}
The author is grateful to Terry Lyons,  Dan Crisan, Ben Hambly, Peter Friz and Michael Caruana for  the  valuable and useful comments and suggestions.


\end{document}